 \newtheoremstyle{mytheorem}% name % cf. thmtest.tex of AMSLaTeX
 {3pt}%      Space above
 {3pt}%      Space below
 {\slshape}% Body font
 {}%         Indent amount (empty = no indent,
\numberwithin{equation}{section}
\theoremstyle{theorem}
\newtheorem{theorem}{Theorem}[section]
\newtheorem{corollary}[theorem]{Corollary}
\newtheorem{lemma}[theorem]{Lemma}
\theoremstyle{definition}
\newtheorem{question}{Question}[section]
\newtheorem{remark}{Remark}[section]
\newcommand{\Keywords}[1]{\ifthenelse{\isempty{#1}}{}{\smallskip \smallskip \noindent \textbf{Keywords}. #1}}
\newcommand{\MSC}[2][2010]{\ifthenelse{\isempty{#2}}{}{\smallskip \smallskip \noindent \textbf{#1MSC}. #2}}
\newcommand{\abstractnote}[1]{\ifthenelse{\isempty{#1}}{}{\smallskip \smallskip \noindent \textsuperscript{\dag}#1}}
\def\specialsection{\@startsection{section}{1}%
  \z@{\linespacing\@plus\linespacing}{.5\linespacing}%
%  {\normalfont\centering}}% DELETED
  {\normalfont}}% NEW
\def\section{\@startsection{section}{1}%
  \z@{.7\linespacing\@plus\linespacing}{.5\linespacing}%
%  {\normalfont\scshape\centering}}% DELETED
  {\normalfont\scshape}}% NEW
\patchcmd{\@settitle}{\uppercasenonmath\@title}{\Large\boldmath}{}{}
\patchcmd{\@settitle}{\begin{center}}{\begin{flushleft}}{}{}
\patchcmd{\@settitle}{\end{center}}{\end{flushleft}}{}{}
\patchcmd{\@setauthors}{\MakeUppercase}{\normalsize}{}{}
\patchcmd{\@setauthors}{\centering}{\raggedright}{}{}
\patchcmd{\section}{\scshape}{\large\bfseries\boldmath}{}{}
\patchcmd{\subsection}{\bfseries}{\bfseries\boldmath}{}{}
\renewcommand{\@secnumfont}{\bfseries}
\patchcmd{\@startsection}{\@afterindenttrue}{\@afterindentfalse}{}{}
\patchcmd{\abstract}{\leftmargin3pc}{\leftmargin1pc}{}{}
\def\maketitle{\par
  \@topnum\z@ % this prevents figures from falling at the top of page 1
  \@setcopyright
  \thispagestyle{empty}% this sets first page specifications
  \ifx\@empty\shortauthors \let\shortauthors\shorttitle
  \else \andify\shortauthors
  \fi
  \@maketitle@hook
  \begingroup
  \@maketitle
  \toks@\@xp{\shortauthors}\@temptokena\@xp{\shorttitle}%
  \toks4{\def\\{ \ignorespaces}}% defend against questionable usage
  \edef\@tempa{%
    \@nx\markboth{\the\toks4
      \@nx\MakeUppercase{\the\toks@}}{\the\@temptokena}}%
  \@tempa
  \endgroup
  \c@footnote\z@
  \@cleartopmattertags
}
\title{Congruences for partition functions related to mock theta functions}
\author[S. Chern]{Shane Chern}
\address[S. Chern]{Department of Mathematics, The Pennsylvania State University, University Park, PA 16802, USA}
\email{shanechern@psu.edu; chenxiaohang92@gmail.com}
\author[L.-J. Hao]{Li-Jun Hao}
\address[L.-J. Hao]{Center for Combinatorics, LPMC, Nankai University, Tianjin 300071, P.R. China}
\email{haolijun152@163.com}
\date{}
\begin{document}

%{\footnotesize\noindent \textit{Preprint}, \arxiv{1707.00801}.}
%
%\bigskip \bigskip

\maketitle

\begin{abstract}

Partitions associated with mock theta functions have received a great deal of attention in the literature. Recently, Choi and Kim derived several partition identities from the third and sixth order mock theta functions. In addition, three Ramanujan-type congruences were established by them. In this paper, we present some new congruences for these partition functions.

\Keywords{Partition, $t$-core partition, cubic partition, mock theta function, Ramanujan-type congruence.}

\MSC{11P83, 05A17.}
\end{abstract}

\section{Introduction}

A partition of a positive integer $n$ is a finite nonincreasing sequence of positive integers whose sum equals $n$. Furthermore, a partition is called a $t$-core partition if there are no hook numbers being multiples of $t$. Let $a_t(n)$ be the number of $t$-core partitions of $n$. It is known \cite{Garvan-Kim-Stanton-1990} that
\begin{align*}
\sum_{n=0}^\infty a_t(n)q^n=\frac{(q^t;q^t)_\infty^t}{(q;q)_\infty}.
\end{align*}
Here and in what follows, we make use of the standard
$q$-series notation (cf. \cite{Gasper-Rahman-2004}).
\begin{align*}
(a)_n=(a;q)_n&:=\prod_{k=0}^{n-1}(1-aq^{k}),\\
(a)_\infty=(a;q)_{\infty}&:=\prod_{k= 0}^\infty (1-aq^{k}),\\
(a_1,a_2,\cdots,a_m;q)_\infty&:=(a_1;q)_\infty(a_2;q)_\infty\cdots(a_m;q)_\infty.
\end{align*}

In addition, the cubic partition, which was introduced by Chan \cite{Chan-2010-1,Chan-2010-2} and named by Kim \cite{Kim-2011} in connection with Ramanujan's cubic continued fractions, is a 2-color partition where the second color appears only in multiples of $2$. Let $a(n)$ denote the number of cubic partitions of $n$, then its generating function is
\begin{align*}
\sum_{n=0}^\infty a(n)q^n=\frac{1}{(q;q)_\infty(q^2;q^2)_\infty}.
\end{align*}

In his last letter to Hardy \cite[pp.~220--223]{BR1995}, Ramanujan defined 17 functions, which he called mock theta functions. Since then, there has been an intense study of partition interpretations for mock theta functions; see \cite{Andrews-2005, Andrews-Dixit-Schultz-Yee-2015, Andrews-Dixit-Yee-2015, Andrews-Garvan-1989, Andrews-Passary-Sellers-Yee-2017}.

Recently, Choi and Kim \cite{Choi-Kim-2012} obtained the following identity related to the third order mock theta function,
\begin{align*}
\upsilon(q)+\upsilon_3(q,q;q)=2\frac{(q^4;q^4)_\infty^3}{(q^2;q^2)_\infty^2},
\end{align*}
where $\upsilon(q)$ is the third mock theta function and $\upsilon_3(q,q;q)$ is defined
by Choi \cite{Choi-2011},
\begin{align*}
\upsilon(q)=\sum_{n=0}^\infty\frac{q^{n(n+1)}}{(-q;q^2)_{n+1}},\quad
\upsilon_3(q,q;q)=\sum_{n=0}^\infty q^n (-q;q^2)_n.
\end{align*}
They also gave the following identities related to the sixth order mock theta functions,
\begin{align*}
\Psi(q)+2\Psi_-(q)&=3\frac{q(q^6;q^6)_\infty^3}{(q;q)_\infty(q^2;q^2)_\infty},\\
2\rho(q)+\lambda(q)&=3\frac{(q^3;q^3)_\infty^3}{(q;q)_\infty(q^2;q^2)_\infty},
\end{align*}
where $\Psi(q)$, $\Psi_-(q)$, $\rho(q)$ and $\lambda(q)$ are the sixth order mock theta functions,
\begin{align*}
&\Psi(q)=\sum_{n=0}^\infty\frac{(-1)^{n}q^{(n+1)^2}(q;q^2)_{n}}{(-q;q)_{2n+1}},&
\Psi_-(q)=\sum_{n=1}^\infty\frac{q^n(-q;q)_{2n-2}}{(q;q^2)_{n}},\\
&\rho(q)=\sum_{n=0}^\infty\frac{q^{\binom{n+1}{2}}(-q;q)_n}{(q;q^2)_{n+1}},&
\lambda(q)=\sum_{n=0}^\infty\frac{(-1)^nq^n(q;q^2)_n}{(-q;q)_{n}}.
\end{align*}
Meanwhile, they studied three analogous partition functions defined by
\begin{align}
\sum_{n=0}^\infty b(n)q^n&=\frac{(q^4;q^4)_\infty^3}{(q^2;q^2)_\infty^2},\label{bn}\\
\sum_{n=0}^\infty c(n)q^n&=\frac{q(q^6;q^6)_\infty^3}{(q;q)_\infty(q^2;q^2)_\infty},\label{cn}\\
\sum_{n=0}^\infty d(n)q^n&=\frac{(q^3;q^3)_\infty^3}{(q;q)_\infty(q^2;q^2)_\infty},\label{dn}
\end{align}
where $b(n)$ denotes the number of partition pairs $(\lambda,\sigma)$ where $\sigma$ is a partition into distinct even parts and $\lambda$ is a partition into even parts of which 2-modular diagram is 2-core, and both $c(n)$ and $d(n)$ can be regarded as $3$-core cubic partitions.

In this paper, we mainly study Ramanujan-type congruences for these partition functions. This paper is organized as follows. In Sect.~\ref{sec:2}, we introduce some preliminary results. In the next two sections, we will prove some Ramanujan-type congruences for $b(n)$ and $c(n)$, respectively. In Sect.~\ref{sec:dn}, by employing $p$-dissection formulas of
Ramanujan's theta functions $\psi(q)$ and $f(-q)$ established by Cui and Gu \cite{Cui-Gu-2013} as well as $(p,k)$-parameter representations due to Alaca and Williams \cite{AW2010},
we show some congruences for $d(n)$. Finally, we end this paper with several open problems.

\section{Preliminaries}\label{sec:2}

Let $f(a,b)$ be Ramanujan's general theta function given by
\begin{equation*}
f(a,b)=\sum_{n=-\infty}^{\infty}a^{\frac{n(n+1)}{2}}b^{\frac{n(n-1)}{2}},\quad|ab|<1.
\end{equation*}
%The Jacobi's triple product identity can be stated as follows
%\begin{equation*}
%f(a,b)=(-a,-b,ab;ab)_{\infty}.
%\end{equation*}

We now introduce the following Ramanujan's classical theta functions,
\begin{align}
\varphi(q)&:=f(q,q)=\sum_{n=-\infty}^{\infty}q^{n^2}=\frac{f^5_2}{f^2_1f^2_4},\label{fqq}\\
\psi(q)&:=f(q,q^3)=\sum_{n=0}^{\infty}q^{\frac{n(n+1)}{2}}=\frac{f^2_2}{f_1},\label{fqq3}\\
f(-q)&:=f(-q,-q^2)=\sum_{n=-\infty}^{\infty}(-1)^nq^{\frac{n(3n+1)}{2}}=f_1.
\end{align}
One readily verifies
\begin{equation}
\varphi(-q)=\frac{f_1^2}{f_2}.
\end{equation}
Here and in the sequel, we write $f_k:=(q^k;q^k)_\infty$ for positive integers $k$ for convenience.

We first require the following 2-dissections.

\begin{lemma}
It holds that
\begin{align}
\frac{1}{f_1^2}&=\frac{f_8^5}{f_2^5f_{16}^2}+2q\frac{f_4^2f_{16}^2}{f_2^5f_8},\label{eq:f1-2}\\
\frac{f_3}{f_1^3}&=\frac{f_4^6f_6^3}{f_2^9f_{12}^2}+3q\frac{f_4^2f_6f_{12}^2}{f_2^7},\label{eq:f3f13}\\
\frac{f_3^3}{f_1}&=\frac{f_4^3f_6^2}{f_2^2f_{12}}+q\frac{f_{12}^3}{f_4}.\label{eq:f33f1}
\end{align}
\end{lemma}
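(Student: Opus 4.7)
My plan is to prove each of the three $2$-dissections stated in the lemma by reducing it to an elementary $2$-dissection of a classical Ramanujan theta function and then translating between the $\varphi$-$\psi$ notation and the $f_k$-shorthand via the product representations \eqref{fqq}--\eqref{fqq3}.

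For the first identity \eqref{eq:f1-2}, I would start from the $2$-dissection
\[\varphi(q)=\varphi(q^4)+2q\psi(q^8),\]
which falls out immediately from splitting the sum $\sum_{n\in\mathbb{Z}}q^{n^2}$ according to the parity of $n$ and recognizing the odd-index sum as $2q\sum_{k\in\mathbb{Z}}q^{4k^2+4k}=2q\psi(q^8)$. Substituting $\varphi(q)=f_2^5/(f_1^2 f_4^2)$, $\varphi(q^4)=f_8^5/(f_4^2 f_{16}^2)$, and $\psi(q^8)=f_{16}^2/f_8$, then dividing through by $f_2^5/f_4^2$, reproduces \eqref{eq:f1-2} verbatim.

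For \eqref{eq:f3f13} and \eqref{eq:f33f1}, I would exploit known $2$-dissections of the eta-quotient $f_3/f_1$ (equivalently, of products such as $\varphi(-q)\varphi(-q^3)$ or $\psi(q)\psi(q^3)$) that are recorded, for instance, in Hirschhorn's monograph \emph{The Power of $q$}. Given such a dissection, \eqref{eq:f33f1} follows by multiplying through by $f_3^2$ and collecting even and odd powers of $q$, whereas \eqref{eq:f3f13} follows by further multiplying by the $2$-dissection of $1/f_1^2$ obtained in \eqref{eq:f1-2} and again collecting the even and odd parts.

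The principal obstacle is purely algebraic: tracking the correct powers of the $f_k$'s through the chain of substitutions and confirming that the resulting products on the right-hand side simplify to precisely the eta-quotients displayed. Should the direct manipulation become unwieldy, a cleaner alternative is to clear denominators in each identity so as to obtain an eta-quotient identity of level $16$ for \eqref{eq:f1-2} and of level $12$ for \eqref{eq:f3f13} and \eqref{eq:f33f1}; since both sides of each such identity are modular forms on $\Gamma_0(N)$ of the same weight and character, equality of finitely many $q$-series coefficients (Sturm's bound) would then suffice to finish. I expect the direct route to be sufficient in the write-up.
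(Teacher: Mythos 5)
Your derivation of \eqref{eq:f1-2} is complete and is exactly the route the paper takes: the paper cites Berndt's Entry 25, which is precisely the parity split $\varphi(q)=\varphi(q^4)+2q\psi(q^8)$, and your translation via $\varphi(q)=f_2^5/(f_1^2f_4^2)$, $\psi(q^8)=f_{16}^2/f_8$ checks out term by term. For \eqref{eq:f3f13} and \eqref{eq:f33f1} the paper offers no argument at all (it cites Xia and Yao), so any honest derivation is already more than the paper provides. That said, your direct route for these two is thinner than you suggest: writing $f_3^3/f_1=(f_3/f_1)\cdot f_3^2$ and ``collecting even and odd powers'' requires the $2$-dissections of \emph{both} factors (the known dissection of $f_3/f_1$ involves $f_{16}$, $f_{24}$, $f_{48}$, and that of $f_3^2$ involves $f_{24}$, $f_{48}$), and the resulting even and odd parts are sums of two eta-quotients each; reducing those sums to the single clean quotients $f_4^3f_6^2/(f_2^2f_{12})$ and $f_{12}^3/f_4$ is not mere bookkeeping but needs additional theta identities (similarly for \eqref{eq:f3f13}, which on your plan compounds two such products). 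So the ``direct route'' will not close on its own. Your fallback, however, is sound: after clearing denominators each of \eqref{eq:f3f13} and \eqref{eq:f33f1} is an identity between holomorphic eta-quotients of equal weight on $\Gamma_0(48)$ (not level $12$ as written, once the dissecting factors $f_4,f_8,f_{12},\dots$ are accounted for), and verifying coefficients up to the Sturm bound finishes the proof rigorously. With that correction of which route actually carries the load, the proposal is acceptable.
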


\begin{proof}
Here \eqref{eq:f1-2} comes from the $2$-dissection of $\varphi(q)$ (cf. \cite[p. 40, Entry 25]{Ber1991}). For \eqref{eq:f3f13} and \eqref{eq:f33f1}, see \cite{Xia-Yao-2013}.
\end{proof}

The following 3-dissections are also necessary.

\begin{lemma}\label{le:3-dis}
It holds that
\begin{align}
\frac{1}{\varphi(-q)}&=\frac{\varphi^3(-q^9)}{\varphi^4(-q^3)}\left(1+2qw(q^3)+4q^2 w^2(q^3)\right),\label{eq:1/-phi--3dis}\\
\frac{1}{\psi(q)}&=\frac{\psi^3(q^9)}{\psi^4(q^3)}\left(\frac{1}{w^2(q^3)}-\frac{q}{w(q^3)}+q^2\right)\label{eq:1/psi--3dis},
\end{align}
where
\begin{equation}
w(q)=\frac{f_1f_6^3}{f_2f_3^3}.
\end{equation}
Furthermore,
\begin{equation}\label{eq:1/f13--3dis}
\frac{1}{f_1^3}=\frac{f_9^3}{f_3^{12}}\left(P^2(q^3)+3qP(q^3)f_9^3+9q^2f_9^6\right),
\end{equation}
where
\begin{equation}\label{eq:P}
P(q)=f_1\left(\frac{\varphi^3(-q^3)}{\varphi(-q)}+4q\frac{\psi^3(q^3)}{\psi(q)}\right).
\end{equation}
\end{lemma}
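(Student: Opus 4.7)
My overall strategy is conjugate rationalization: if a theta function admits a 2-term 3-dissection $F(q)=u(q^3)+\epsilon q\,v(q^3)$ with $\epsilon$ a numerical constant, then
\[
\frac{1}{F(q)} = \frac{u^2 - \epsilon q uv + \epsilon^2 q^2 v^2}{u^3 + \epsilon^3 q^3 v^3},
\]
and the denominator is a pure series in $q^3$, so the 3-dissection of $1/F$ can be read off immediately.

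For \eqref{eq:1/-phi--3dis}, I would start from Ramanujan's classical 3-dissection
\[
\varphi(-q) = \varphi(-q^9) - 2q\, f(-q^3,-q^{15})
\]
(see \cite[p.~49]{Ber1991}). Setting $u=\varphi(-q^9)$, $v=f(-q^3,-q^{15})$, rationalization yields
\[
\frac{1}{\varphi(-q)} = \frac{u^2 + 2quv + 4q^2 v^2}{u^3 - 8q^3 v^3}.
\]
Two identifications then close the argument: by the Jacobi triple product $f(-q,-q^5)=f_1 f_6^2/(f_2 f_3)$ and $\varphi(-q^3)=f_3^2/f_6$, so after rescaling $q\to q^3$ the ratio $v/u$ equals $f_3 f_{18}^3/(f_6 f_9^3) = w(q^3)$; and the denominator reduces via the theta identity
\[
\varphi(-q^9)\bigl[\varphi^3(-q^9)-8q^3 f^3(-q^3,-q^{15})\bigr] = \varphi^4(-q^3),
\]
which I would verify by eta-quotient comparison or by specializing a cubic identity from \cite[Chap.~19]{Ber1991}. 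Identity \eqref{eq:1/psi--3dis} is proved in exactly the same way from the standard 3-dissection $\psi(q)=f(q^3,q^6)+q\,\psi(q^9)$ (so $\epsilon=+1$); using $f(q,q^2)=f_2 f_3^2/(f_1 f_6)$ one finds the same $w$ appears, and the factor $1/w^2(q^3)$ in the lemma comes from clearing a $w^2$ out of the numerator $u^2 - quv + q^2 v^2$.

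For \eqref{eq:1/f13--3dis}, the key observation is that Jacobi's identity $f_1^3=\sum_{n\ge 0}(-1)^n(2n+1)q^{n(n+1)/2}$ produces only exponents with $n(n+1)/2 \equiv 0$ or $1\pmod{3}$, so $f_1^3$ is automatically of the form $U(q^3)+q\,V(q^3)$. Matching the target formula against the general rationalization template forces $V=-3f_9^3$ and $U = P(q^3)$, so the crux of the lemma is the striking identity
\[
f_1^3 = P(q^3) - 3q\,f_9^3.
\]
I would establish this by inserting the 3-dissections \eqref{eq:1/-phi--3dis} and \eqref{eq:1/psi--3dis} into the definition \eqref{eq:P} of $P$, expanding $\varphi^3(-q^3)/\varphi(-q)$ and $\psi^3(q^3)/\psi(q)$ explicitly in terms of $w(q^3)$, and then simplifying via the product relation $\varphi(-q)\psi(q)=f_1 f_2$. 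Given $f_1^3=P(q^3)-3qf_9^3$, rationalization produces
\[
\frac{1}{f_1^3} = \frac{P^2(q^3) + 3q\,P(q^3)f_9^3 + 9q^2 f_9^6}{P^3(q^3) - 27q^3 f_9^9},
\]
and the statement \eqref{eq:1/f13--3dis} reduces to the denominator identity $f_9^3\bigl(P^3(q^3) - 27q^3 f_9^9\bigr) = f_3^{12}$, which follows either by cubing $f_1^3 = P(q^3)-3qf_9^3$ and extracting the $q^0 \pmod 3$ component, or by direct eta-quotient manipulation. The main obstacle throughout the proof of \eqref{eq:1/f13--3dis} is the algebraic bookkeeping required to collapse the expression $f_1\bigl(\varphi^3(-q^3)/\varphi(-q) + 4q\,\psi^3(q^3)/\psi(q)\bigr)$, via \eqref{eq:1/-phi--3dis} and \eqref{eq:1/psi--3dis}, to precisely the $U(q^3)$-part of $f_1^3$.
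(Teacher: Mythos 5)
Your rationalization strategy is a genuinely different route from the paper's: the paper gives essentially no proof of \eqref{eq:1/-phi--3dis}, \eqref{eq:1/psi--3dis} and \eqref{eq:1/f13--3dis}, citing Baruah--Ojah and Wang, and only works to reconcile Wang's Lambert-series form of $P$ with \eqref{eq:P} via two expansions of Shen. For \eqref{eq:1/-phi--3dis} and \eqref{eq:1/psi--3dis} your argument is correct and can be made fully self-contained: the dissections $\varphi(-q)=\varphi(-q^9)-2qf(-q^3,-q^{15})$ and $\psi(q)=f(q^3,q^6)+q\psi(q^9)$, the product evaluations $f(-q,-q^5)=f_1f_6^2/(f_2f_3)$ and $f(q,q^2)=f_2f_3^2/(f_1f_6)$, and the ratios $v/u=w(q^3)$, $U/V=1/w(q^3)$ all check out. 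The only real content is the pair of norm identities $u^3-8q^3v^3=\varphi^4(-q^3)/\varphi(-q^9)$ and $U^3+q^3V^3=\psi^4(q^3)/\psi(q^9)$; rather than waving at Chapter 19, note that the left sides equal $\prod_{j=0}^{2}\varphi(-\omega^jq)$ and $\prod_{j=0}^{2}\psi(\omega^jq)$ respectively, and these products evaluate immediately from $\prod_{j}(\omega^jq;\omega^jq)_\infty=f_3^4/f_9$.

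The gap is in \eqref{eq:1/f13--3dis}. Jacobi does give $f_1^3=U(q^3)-3qf_9^3$, and the cube-roots-of-unity product gives $U^3(q^3)-27q^3f_9^9=\prod_j(\omega^jq;\omega^jq)_\infty^3=f_3^{12}/f_9^3$ (your alternative of ``cubing and extracting the $q^{3n}$-component'' does not stand alone, since you would then need to know independently that the $q^{3n}$-part of $f_1^9$ is $f_3^{12}/f_9^3$). But the crux of the lemma is the identification $U(q^3)=P(q^3)$ with $P$ as in \eqref{eq:P}, i.e.\ $f_1^3+3qf_9^3=f_3\bigl(\varphi^3(-q^9)/\varphi(-q^3)+4q^3\psi^3(q^9)/\psi(q^3)\bigr)$, and your proposed method does not prove it. Substituting \eqref{eq:1/-phi--3dis} and \eqref{eq:1/psi--3dis} into \eqref{eq:P} only produces the $3$-dissection of $a(q):=\varphi^3(-q^3)/\varphi(-q)+4q\,\psi^3(q^3)/\psi(q)$ in terms of $a(q^3)$ and $w(q^3)$; it relates $P(q)$ to $P(q^3)$ but never connects either one to the Jacobi coefficients $\sum_{n\equiv 0,2\ (\mathrm{mod}\ 3)}(-1)^n(2n+1)q^{n(n+1)/2}$, so the argument is circular in effect. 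Some independent input is required here --- for instance the route the paper takes, namely Shen's Lambert-series expansions showing that $a(q)$ equals the Borwein function $1+6\sum_{n\ge0}\bigl(q^{3n+1}/(1-q^{3n+1})-q^{3n+2}/(1-q^{3n+2})\bigr)$, combined with Wang's identity $f_1^3=f_3\,a(q^3)-3qf_9^3$ --- or else a direct proof of that last cubic identity, which is exactly the step your sketch leaves unproved.
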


\begin{proof}
For \eqref{eq:1/-phi--3dis} and \eqref{eq:1/psi--3dis}, see Baruah and Ojah \cite{Baruah-Ojah-2011}. For \eqref{eq:1/f13--3dis}, see Wang \cite{Wang-2016}. Note that Wang \cite{Wang-2016} showed
$$P(q)=f_1\left(1+6\sum_{n\ge 0} \left(\frac{q^{3n+1}}{1-q^{3n+1}}-\frac{q^{3n+2}}{1-q^{3n+2}}\right)\right).$$
We know from \cite[Eqs.~(3.2) and (3.5)]{She1994} that
$$4q\frac{\psi^3(q^3)}{\psi(q)}=4\sum_{n\ge 0}\left(\frac{q^{3n+1}}{1-q^{6n+2}}-\frac{q^{3n+2}}{1-q^{6n+4}}\right),$$
$$\frac{\varphi^3(-q^3)}{\varphi(-q)}=1+2\sum_{n\ge 0}\left(\frac{q^{6n+1}}{1-q^{6n+1}}+\frac{q^{6n+2}}{1-q^{6n+2}}-\frac{q^{6n+4}}{1-q^{6n+4}}-\frac{q^{6n+5}}{1-q^{6n+5}}\right).
$$
Hence \eqref{eq:P} follows immediately by the following trivial identity
$$\frac{x}{1-x^2}=\frac{x}{1-x}-\frac{x^2}{1-x^2}.$$
\end{proof}

Furthermore, we need

\begin{lemma}[{\cite[Theorem 2.1]{Cui-Gu-2013}}]\label{psi-p}
For any odd prime $p$,
$$\psi(q)=q^{\frac{p^{2}-1}{8}}\psi(q^{p^{2}})+\sum_{k=0}^{\frac{p-3}{2}}q^{\frac{k^{2}+k}{2}}
f\left(q^{\frac{p^{2}+(2k+1)p}{2}},q^{\frac{p^{2}-(2k+1)p}{2}}\right).
$$
Furthermore, we claim that for $0\leq k\leq(p-3)/2$,
$$\frac{k^{2}+k}{2} \not \equiv \frac{p^{2}-1}{8} \pmod p.$$
\end{lemma}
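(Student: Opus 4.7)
The plan is to partition the series $\psi(q)=\sum_{N\ge 0}q^{N(N+1)/2}$ according to the residue of $N$ modulo $p$, and then exploit the fact that $N(N+1)/2$ is invariant under the involution $N\mapsto -N-1$. Under this involution the residue $k\bmod p$ is sent to $p-1-k\bmod p$, so the $p$ residue classes partition into $(p-1)/2$ genuine pairs $\{k,p-1-k\}$ together with a single fixed point $k=(p-1)/2$.

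First I would peel off the fixed-point residue. With $N=pm+(p-1)/2$, $m\ge 0$, the exponent simplifies to $N(N+1)/2=p^{2}m(m+1)/2+(p^{2}-1)/8$, so summing on $m\ge 0$ produces exactly the isolated term $q^{(p^{2}-1)/8}\psi(q^{p^{2}})$.

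Next I would treat a typical pair $\{k,p-1-k\}$ with $0\le k\le (p-3)/2$. The key observation is that $N\mapsto -N-1$ gives a bijection between $\{N\ge 0:N\equiv p-1-k\pmod p\}$ and $\{N\le -1:N\equiv k\pmod p\}$ that preserves $N(N+1)/2$, so the combined contribution of the two residues is the two-sided theta series $\sum_{m\in\mathbb{Z}}q^{(pm+k)(pm+k+1)/2}$. Expanding the exponent as $\tfrac{p^{2}m^{2}+(2k+1)pm+k(k+1)}{2}$ and matching against $f(q^{A},q^{B})=\sum_{m}q^{((A+B)m^{2}+(A-B)m)/2}$, I read off $A+B=p^{2}$ and $A-B=(2k+1)p$, giving exactly the stated summand $q^{k(k+1)/2}f(q^{(p^{2}+(2k+1)p)/2},q^{(p^{2}-(2k+1)p)/2})$. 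Summing over $k=0,\ldots,(p-3)/2$ then exhausts the non-fixed residues and completes the dissection.

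For the furthermore claim, I would multiply the putative congruence $(k^{2}+k)/2\equiv (p^{2}-1)/8\pmod p$ by $8$ (invertible modulo the odd prime $p$) to arrive at $(2k+1)^{2}\equiv 0\pmod p$, equivalently $p\mid 2k+1$; but $1\le 2k+1\le p-2$ in the given range, so this is impossible. The only real subtlety, and the step I would be most careful about, is the bookkeeping in the pair step: one must verify that the fixed point $k=(p-1)/2$ has to be extracted beforehand, because otherwise it would contribute with multiplicity two in the two-sided formalism.
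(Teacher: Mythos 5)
Your proof is correct and complete: the residue-class dissection of $\sum_{N\ge 0}q^{N(N+1)/2}$ modulo $p$, the extraction of the fixed class $k=(p-1)/2$, the pairing via $N\mapsto -N-1$ to produce the bilateral series $f(q^{(p^2+(2k+1)p)/2},q^{(p^2-(2k+1)p)/2})$, and the $(2k+1)^2\equiv 0\pmod p$ argument for the non-congruence are all sound. The paper gives no proof of this lemma, merely citing Cui and Gu, and your argument is essentially the standard one found there.
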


\begin{lemma}[{\cite[Theorem 2.2]{Cui-Gu-2013}}]\label{cg13}
For any prime $p\geq5$,
\begin{align*}
f(-q)&=(-1)^{\frac{\pm p-1}{6}}q^{\frac{p^{2}-1}{24}}f(-q^{p^{2}})\\
&\quad\quad+\sum_{\substack{k=-\frac{p-1}{2}\\
k\neq\frac{\pm p-1}{6}}}^{\frac{p-1}{2}}(-1)^{k}q^{\frac{3k^{2}+k}{2}}f\left(-q^{\frac{3p^{2}+(6k+1)p}{2}},-q^{\frac{3p^{2}-(6k+1)p}{2}}\right).
\end{align*}
Furthermore, we claim that for $-(p-1)/2\leq k\leq(p-1)/2$ and
$k\neq(\pm p-1)/6$,
\begin{align*}
\frac{3k^{2}+k}{2} \not\equiv
\frac{p^{2}-1}{24} \pmod{p}.
\end{align*}
Here for any prime $p\geq 5$,
\begin{equation*}
\frac{\pm p-1}{6}:=\left\{\begin{array}{ll}\frac{p-1}{6},&\ p \equiv 1
\pmod{6},\\[5pt]
\frac{-p-1}{6},&\ p \equiv -1 \pmod{6}.\end{array}\right.
\end{equation*}
\end{lemma}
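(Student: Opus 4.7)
The plan is to derive the $p$-dissection of $f(-q)$ directly from the series $f(-q)=\sum_{n\in\mathbb Z}(-1)^{n}q^{n(3n+1)/2}$ recorded in the excerpt, by splitting the index $n$ into residue classes modulo $p$. First I would write $n=pm+k$ with $m\in\mathbb Z$ and $k$ ranging over the symmetric system $-(p-1)/2\le k\le(p-1)/2$. A direct expansion gives
\begin{align*}
\frac{n(3n+1)}{2}=\frac{3p^{2}m^{2}+p(6k+1)m}{2}+\frac{3k^{2}+k}{2},
\end{align*}
and $(-1)^{n}=(-1)^{m}(-1)^{k}$ since $p$ is odd, so the sum factors as
\begin{align*}
f(-q)=\sum_{k=-(p-1)/2}^{(p-1)/2}(-1)^{k}q^{(3k^{2}+k)/2}\sum_{m\in\mathbb Z}(-1)^{m}q^{3p^{2}m^{2}/2+p(6k+1)m/2}.
\end{align*}

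Next I would recognize the inner sum as a Ramanujan general theta function. From $f(a,b)=\sum_{m}a^{m(m+1)/2}b^{m(m-1)/2}$ together with the identities $m^{2}=m(m+1)/2+m(m-1)/2$ and $m=m(m+1)/2-m(m-1)/2$, one obtains $\sum_{m}(-1)^{m}q^{Am^{2}+Bm}=f(-q^{A+B},-q^{A-B})$. Applying this with $A=3p^{2}/2$ and $B=p(6k+1)/2$ yields exactly the summands of the lemma. The leading term is then isolated by finding the unique $k$ in the symmetric range for which the inner theta function degenerates: this happens precisely when $6k+1=\pm p$, i.e.\ $k=(\pm p-1)/6$, which is an integer in exactly one of the cases $p\equiv\pm1\pmod 6$. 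For that $k$ the two arguments of the inner $f$ reduce to $-q^{p^{2}}$ and $-q^{2p^{2}}$ (in one order or the other), so by $f(a,b)=f(b,a)$ the summand collapses to $f(-q^{p^{2}})$; a short direct computation shows $(3k^{2}+k)/2=(p^{2}-1)/24$ in either subcase, giving the main term $(-1)^{(\pm p-1)/6}q^{(p^{2}-1)/24}f(-q^{p^{2}})$.

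For the noncongruence claim I would rely on the elementary algebraic identity
\begin{align*}
24\cdot\frac{3k^{2}+k}{2}-(p^{2}-1)=(6k+1)^{2}-p^{2}.
\end{align*}
Because $\gcd(24,p)=1$ for $p\ge 5$, the congruence $(3k^{2}+k)/2\equiv(p^{2}-1)/24\pmod p$ is equivalent to $p\mid 6k+1$. In the range $|k|\le(p-1)/2$ one has $|6k+1|\le 3p-2<3p$, so the only integer possibilities are $6k+1\in\{0,\pm p\}$; the value $0$ is excluded because $6k+1$ is odd, and the remaining two give exactly the excluded $k=(\pm p-1)/6$. The main obstacle is not conceptual but organizational: one has to track the residue of $p$ modulo $6$ carefully, so that the correct integer $k=(\pm p-1)/6$ is selected, the sign $(-1)^{(\pm p-1)/6}$ is assigned with a consistent convention, and the two subcases of the main-term evaluation are checked to collapse to the same closed form.
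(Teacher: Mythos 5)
Your argument is correct and is essentially the standard residue-class dissection proof of this result; the paper itself gives no proof and simply cites Cui and Gu, whose argument proceeds exactly as you describe (split $n=pm+k$, recognize the inner sum as $f(-q^{A+B},-q^{A-A+B-B+A-B})$ -- more precisely $f(-q^{A+B},-q^{A-B})$ -- and isolate the degenerate term $6k+1=\pm p$). One small bookkeeping point: for $|k|\le(p-1)/2$ the multiples of $p$ satisfying $|6k+1|\le 3p-2$ are $0$, $\pm p$, \emph{and} $\pm 2p$, not just $0$ and $\pm p$; since $6k+1$ is odd, the values $0$ and $\pm 2p$ are both excluded by the parity observation you already make, so the conclusion stands unchanged.
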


At last, we require the following relations due to Alaca and Williams \cite{AW2010}.

\begin{lemma}\label{le:AW}
Let
$$p=p(q):=\frac{\varphi^2(q)-\varphi^2(q^3)}{2\varphi^2(q^3)},$$
and
$$k=k(q):=\frac{\varphi^3(q^3)}{\varphi(q)}.$$
Then
\begin{align*}
f_1&=2^{-\frac{1}{6}}q^{-\frac{1}{24}}p^{\frac{1}{24}}(1-p)^{\frac{1}{2}}(1+p)^{\frac{1}{6}}(1+2p)^{\frac{1}{8}}(2+p)^{\frac{1}{8}}k^{\frac{1}{2}},\\
f_2&=2^{-\frac{1}{3}}q^{-\frac{1}{12}}p^{\frac{1}{12}}(1-p)^{\frac{1}{4}}(1+p)^{\frac{1}{12}}(1+2p)^{\frac{1}{4}}(2+p)^{\frac{1}{4}}k^{\frac{1}{2}},\\
f_3&=2^{-\frac{1}{6}}q^{-\frac{1}{8}}p^{\frac{1}{8}}(1-p)^{\frac{1}{6}}(1+p)^{\frac{1}{2}}(1+2p)^{\frac{1}{24}}(2+p)^{\frac{1}{24}}k^{\frac{1}{2}},\\
f_4&=2^{-\frac{2}{3}}q^{-\frac{1}{6}}p^{\frac{1}{6}}(1-p)^{\frac{1}{8}}(1+p)^{\frac{1}{24}}(1+2p)^{\frac{1}{8}}(2+p)^{\frac{1}{2}}k^{\frac{1}{2}},\\
f_6&=2^{-\frac{1}{3}}q^{-\frac{1}{4}}p^{\frac{1}{4}}(1-p)^{\frac{1}{12}}(1+p)^{\frac{1}{4}}(1+2p)^{\frac{1}{12}}(2+p)^{\frac{1}{12}}k^{\frac{1}{2}},\\
f_{12}&=2^{-\frac{2}{3}}q^{-\frac{1}{2}}p^{\frac{1}{2}}(1-p)^{\frac{1}{24}}(1+p)^{\frac{1}{8}}(1+2p)^{\frac{1}{24}}(2+p)^{\frac{1}{6}}k^{\frac{1}{2}}.
\end{align*}
\end{lemma}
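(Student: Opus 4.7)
The plan is to invert the definitions of $p$ and $k$, build up closed forms for enough theta functions in the parameters $p,k$, and then solve the resulting multiplicative system for the six eta products $f_1,f_2,f_3,f_4,f_6,f_{12}$.

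Step 1 is essentially algebra: from $1+2p = \varphi^2(q)/\varphi^2(q^3)$ and $k\varphi(q) = \varphi^3(q^3)$ one deduces
\[
\varphi(q^3)=k^{1/2}(1+2p)^{1/4},\qquad \varphi(q)=k^{1/2}(1+2p)^{3/4}.
\]
Step 2 is to produce analogous $(p,k)$-parametrizations for the four remaining building blocks $\varphi(-q),\varphi(-q^3),\psi(q),\psi(q^3)$. For this I would use classical theta-function duplication and cubic-type identities such as $\varphi(q)\varphi(-q)=\varphi^2(-q^2)$, $\varphi(q)^2=\varphi(q^2)^2+4q\psi(q^4)^2$, and the cubic analogues recorded in Berndt's edition of Ramanujan's notebooks relating $\varphi(q)$, $\varphi(q^3)$, $\psi(q)$, and $\psi(q^3)$. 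Each such identity, after substituting the closed forms already in hand, expresses one additional theta value as a rational combination of $p$, $1-p$, $1+p$, $1+2p$, $2+p$, and $k$ with fractional exponents; the factors $1-p$, $1+p$, and $2+p$ arise from the factorization of the discriminantal polynomial attached to the cubic modular relation between $\varphi(q)$ and $\varphi(q^3)$.

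Step 3 is to invoke the standard eta-product forms
\[
\varphi(q)=\frac{f_2^5}{f_1^2 f_4^2},\qquad \varphi(-q)=\frac{f_1^2}{f_2},\qquad \psi(q)=\frac{f_2^2}{f_1},
\]
together with their $q\mapsto q^3$ counterparts. Taking logarithms converts the six resulting identities into a linear system with six equations in the six unknowns $\log f_1,\log f_2,\log f_3,\log f_4,\log f_6,\log f_{12}$. The coefficient matrix (rows indexed by the theta identities, columns by the $f_k$'s) is readily checked to be invertible. Solving it yields each $f_k$ as the stated product of fractional powers of $p$, $1-p$, $1+p$, $1+2p$, $2+p$, $k$; the $q^{-j/24}$ prefactors emerge automatically by enforcing homogeneity in $q$, reflecting the Dedekind-eta scaling $\eta(\tau)=q^{1/24}f_1$.

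The main obstacle is Step 2: one needs to generate four algebraically independent theta-function identities, and then carry out the bookkeeping of fractional exponents and sign/branch choices without error. A reliable safeguard is a $q$-series sanity check: starting from $\varphi(q)=1+2q+2q^4+\cdots$ and $\varphi(q^3)=1+2q^3+\cdots$ one computes the power-series expansions of $p$ and $k$, substitutes into each candidate formula for $f_k$, and verifies agreement with the direct expansion of $(q^k;q^k)_\infty$ to the order consistent with the precision used.
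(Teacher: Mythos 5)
The paper does not actually prove this lemma: it is quoted verbatim from Alaca and Williams \cite{AW2010}, so the ``paper's proof'' is a citation. Your Steps 1 and 3 are sound. From $1+2p=\varphi^2(q)/\varphi^2(q^3)$ and $k\varphi(q)=\varphi^3(q^3)$ one does get $\varphi(q^3)=k^{1/2}(1+2p)^{1/4}$ and $\varphi(q)=k^{1/2}(1+2p)^{3/4}$, and the six eta-quotient relations $\varphi(q)=f_2^5/(f_1^2f_4^2)$, $\varphi(-q)=f_1^2/f_2$, $\psi(q)=f_2^2/f_1$ together with their $q\mapsto q^3$ images do give an invertible $6\times 6$ exponent system for $f_1,f_2,f_3,f_4,f_6,f_{12}$ (one can check the stated formulas are consistent with Step 1, e.g.\ they imply $\varphi(-q)=k^{1/2}(1-p)^{3/4}(1+p)^{1/4}$ and reproduce $\varphi(q)=k^{1/2}(1+2p)^{3/4}$).

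The genuine gap is Step 2, which you yourself flag as the main obstacle but never carry out, and which is the entire mathematical content of the lemma. You need explicit $(p,k)$-parametrizations of $\varphi(-q)$, $\varphi(-q^3)$, $\psi(q)$, $\psi(q^3)$ (equivalently, four identities independent of the two in Step 1), and it is precisely there that the factors $(1-p)$, $(1+p)$, $(2+p)$ and all the fractional exponents originate. The identities you name do not suffice as stated: $\varphi(q)\varphi(-q)=\varphi^2(-q^2)$ introduces the new unknown $\varphi(-q^2)$ rather than resolving $\varphi(-q)$, and ``the cubic analogues recorded in Berndt'' together with ``the factorization of the discriminantal polynomial'' is a gesture at the needed input, not a derivation of it. In Alaca--Williams these four parametrizations are themselves nontrivial theorems (built on the Borwein cubic theta theory and identities such as $\varphi(q)\varphi(q^3)-\varphi(-q)\varphi(-q^3)=4q\psi(q^2)\psi(q^6)$), and without them your linear system in Step 3 cannot even be written down. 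The closing $q$-series check is a useful numerical verification of the lemma but is not a proof. As it stands, the proposal is a correct plan whose decisive middle step is missing.
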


\section{Congruences for $b(n)$}

\begin{theorem}
For $n\ge 0$, $\alpha\ge 1$, and prime $p\ge 5$, we have
\begin{align}
b\left(p^{2\alpha}n+\frac{(3j+p)p^{2\alpha-1}-1}{3}\right)\equiv0\pmod2,
\end{align}
where $j=1$, $2$, $\cdots$, $p-1$.
\end{theorem}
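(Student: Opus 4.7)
The plan is to reduce the generating function of $b(n)$ modulo $2$ to a single infinite product, identify its odd-coefficient support via Euler's pentagonal number theorem, and then rule out the arithmetic progression in the theorem by a $p$-adic valuation argument. None of the dissection formulas of Section~\ref{sec:2} are needed.

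First, I would pass to mod $2$. The elementary identity $(1-x)^{2}\equiv 1-x^{2}\pmod 2$ gives $f_{k}^{2}\equiv f_{2k}\pmod 2$, so from \eqref{bn},
\begin{equation*}
\sum_{n\ge 0}b(n)q^{n}=\frac{f_{4}^{3}}{f_{2}^{2}}\equiv\frac{f_{4}^{3}}{f_{4}}=f_{4}^{2}\equiv f_{8}\pmod{2}.
\end{equation*}
Applying Euler's pentagonal number theorem $f_{1}=\sum_{m\in\mathbb{Z}}(-1)^{m}q^{m(3m-1)/2}$ after replacing $q$ by $q^{8}$ yields
\begin{equation*}
\sum_{n\ge 0}b(n)q^{n}\equiv\sum_{m\in\mathbb{Z}}(-1)^{m}q^{4m(3m-1)}\pmod{2}.
\end{equation*}
Hence $b(N)$ is odd precisely when $N=4m(3m-1)$ for some $m\in\mathbb{Z}$, equivalently when $3N+1=(6m-1)^{2}$ is a perfect square.

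Second, I would plug in $N=p^{2\alpha}n+\frac{(3j+p)p^{2\alpha-1}-1}{3}$. This is an integer because $\gcd(p,3)=1$ forces $p^{2\alpha}\equiv 1\pmod 3$. A direct computation gives
\begin{equation*}
3N+1=p^{2\alpha-1}\bigl(p(3n+1)+3j\bigr).
\end{equation*}
Since $p\ge 5$ and $1\le j\le p-1$, we have $p\nmid 3$ and $p\nmid j$, whence $p\nmid 3j$ and so $p\nmid p(3n+1)+3j$. The $p$-adic valuation of $3N+1$ is therefore exactly $2\alpha-1$, which is odd. Consequently $3N+1$ cannot be a perfect square, and by the characterization above $b(N)\equiv 0\pmod 2$, proving the theorem.

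The only step that requires any insight is spotting the mod $2$ collapse $f_{4}^{3}/f_{2}^{2}\equiv f_{8}$; after that the result is purely a matter of elementary divisibility. The uniformity of the congruence family over all primes $p\ge 5$ is then transparent, since the argument uses nothing about $p$ beyond $\gcd(p,6)=1$.
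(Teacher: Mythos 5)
Your proof is correct, and it takes a genuinely different (and more elementary) route than the paper. Both arguments begin with the same reduction $\sum b(n)q^n=f_4^3/f_2^2\equiv f_8\pmod 2$, but from there the paper invokes the Cui--Gu $p$-dissection of $f(-q)$ (Lemma \ref{cg13}) twice to set up an induction on $\alpha$, arriving at $\sum_n b\bigl(p^{2\alpha-1}n+\tfrac{p^{2\alpha}-1}{3}\bigr)q^n\equiv \pm f(-q^{8p})\pmod 2$ and then reading off the vanishing residue classes. You instead use the pentagonal number theorem to characterize the odd values directly ($b(N)$ odd iff $N=4m(3m-1)$, which forces $3N+1=(6m-1)^2$) and then kill the whole progression at once with the observation that $3N+1=p^{2\alpha-1}\bigl(p(3n+1)+3j\bigr)$ has odd $p$-adic valuation, hence is never a square; your computation of $3N+1$ and the non-divisibility $p\nmid 3j$ are both right, and the exponents $4m(3m-1)$ are distinct for distinct $m$, so there is no cancellation issue mod $2$. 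One small imprecision: the converse ``$3N+1$ a perfect square $\Rightarrow b(N)$ odd'' is false as stated (e.g.\ $N=1$ gives $3N+1=4$ but $b(1)$ is even, since the square must be odd), but you only use the forward implication, so the argument is unaffected. Your approach buys transparency and uniformity --- it shows exactly which coefficients are odd, avoids the induction entirely, and makes clear that only $\gcd(p,6)=1$ is needed --- whereas the paper's dissection-plus-induction template is the one that generalizes to the analogous mod $2$, $3$, and $9$ results for $d(n)$ later in the paper, where the underlying series ($\psi(q)$, $f(-q)$) again have sparse support but the same machinery is reused verbatim.
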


\begin{proof}
In light of \eqref{bn}, we derive that
\begin{align*}
\sum_{n=0}^\infty b(n)q^n=\frac{f_4^3}{f_2^2}\equiv f_8\pmod2.
\end{align*}

Applying Lemma \ref{cg13}, we deduce that, for any prime $p\geq5$,
$$\sum_{n=0}^\infty b\left(pn+\frac{p^2-1}{3}\right)q^n\equiv (-1)^{\frac{\pm p-1}{6}} f(-q^{8p})\pmod2,$$
and
$$\sum_{n=0}^\infty b\left(p^2n+\frac{p^2-1}{3}\right)q^n\equiv (-1)^{\frac{\pm p-1}{6}} f(-q^{8})\pmod2.$$
Moreover,
\begin{align*}
\sum_{n=0}^\infty b\left(p^{3}n+\frac{p^{4}-1}{3}\right)q^n&\equiv f(-q^{8p})\pmod2.
\end{align*}

Hence, by induction on $\alpha$, we derive that, for $\alpha\geq1$,
\begin{align*}
\sum_{n=0}^\infty b\left(p^{2\alpha-1}n+\frac{p^{2\alpha}-1}{3}\right)q^n&\equiv (-1)^{\alpha \left(\frac{\pm p-1}{6}\right)} f(-q^{8p})\pmod2.
\end{align*}
This immediately leads to
\begin{align*}
b\left(p^{2\alpha-1}(pn+j)+\frac{p^{2\alpha}-1}{3}\right)\equiv 0\pmod2,
\end{align*}
for $j=1$, $2$, $\cdots$, $p-1$. We complete the proof.
\end{proof}

%\begin{theorem}
%For $n\geq0$, we have
%\begin{align}
%b(8n+6)\equiv0\pmod4.
%\end{align}
%\end{theorem}
%\begin{proof}
\begin{remark}
When studying $1$-shell totally symmetric plane partition function $f(n)$ (which is different to Ramanujan's theta function $f(-q)$ given in Sect.~\ref{sec:2}) introduced by Blecher \cite{Ble2012}, Hirschhorn and Sellers \cite{Hirschhorn-Sellers-2014} proved that, for $n\ge 1$,
$$f(3n-2)=h(n),$$
with
$$\sum_{n=0}^\infty h(2n+1)q^n =\frac{f_2^3}{f_1^2}.$$

A couple of congruences modulo powers of $2$ and $5$ for $h(n)$ have been obtained subsequently; see \cite{Che2017,Xia15,Yao-2014}. We see from \eqref{bn} that
$$b(2n)=h(2n+1).$$
One therefore may obtain some congruences for $b(n)$ as well. For example,
\begin{align*}
b(8n+6)\equiv0\pmod4.
\end{align*}

%With the help of \eqref{bn} and \eqref{eq:f1-2}, we have
%\begin{align*}
%\sum_{n=0}^\infty b(2n)q^n=\frac{f_2^3}{f_1^2}=\frac{f_8^5}{f_2^2f_{16}^2}+2q\frac{f_4^2f_{16}^2}{f_2^2f_8}.
%\end{align*}
%
%Hence
%\begin{align*}
%\sum_{n=0}^\infty b(4n+2)q^n=2\frac{f_2^2f_{8}^2}{f_1^2f_4}=2\frac{f_2^2f_{8}^2}{f_4}
%\left(\frac{f_8^5}{f_2^5f_{16}^2}+2q\frac{f_4^2f_{16}^2}{f_2^5f_8}\right).
%\end{align*}
%This tells that
%\begin{align}\label{b8n+6}
%\sum_{n=0}^\infty b(8n+6)q^n=4\frac{f_2f_4f_{8}^2}{f_1^3}.
%\end{align}
%In addition, , Hirschhorn and Sellers \cite{Hirschhorn-Sellers-2014} showed that
%\begin{align*}
%\sum_{n=0}^\infty h(2n+1)q^n=\frac{f_2^3}{f_1^2},
%\end{align*}
%where
%\begin{align*}
%\sum_{n=0}^\infty h(n)q^n=\sum_{n=1}^\infty q^n\prod_{i=0}^{n-2}(1+q^{2i+1}).
%\end{align*}
%Furthermore, Yao \cite[Eq. (3.6)]{Yao-2014} proved that
%\begin{align*}
%\sum_{n=0}^\infty h(8n+7)q^n=4\frac{f_2f_4f_{8}^2}{f_1^3},
%\end{align*}
%which is the same as \eqref{b8n+6}. Therefore, the congruences for $b(8n+6)$ can be derived from \cite{Yao-2014}.
%\end{proof}
\end{remark}

\section{Congruences for $c(n)$}

\begin{theorem}
For $n\geq 0$, we have
\begin{align}
c(27n+24)\equiv0\pmod9.
\end{align}
\end{theorem}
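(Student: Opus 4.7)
The plan is to obtain a usable formula for $\sum_{n}c(n)q^n\pmod 9$ by first writing $\frac{1}{f_1f_2}=\frac{\varphi(-q)}{f_1^3}$ (which follows from $\varphi(-q)=f_1^2/f_2$) and then applying Wang's 3-dissection of $1/f_1^3$ from Lemma~\ref{le:3-dis}. The summand containing the explicit factor $9q^2f_9^6$ vanishes modulo $9$, so
$$\sum_{n\ge 0}c(n)q^n \equiv qf_6^3\,\varphi(-q)\,\frac{f_9^3}{f_3^{12}}\Big(P^2(q^3)+3qP(q^3)f_9^3\Big)\pmod 9.$$
I then substitute the classical 3-dissection $\varphi(-q)=\varphi(-q^9)-2qf(-q^3,-q^{15})$. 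The resulting expansion has four summands; classifying them by $q$-exponent modulo $3$, three of them have residues $1$ or $2$ and only the cross-term of $3q^2\cdot P(q^3)f_9^6/f_3^{12}$ with $-2qf(-q^3,-q^{15})$ produces residue $0$. Extracting this piece, replacing $q^3$ by $q$, and using $-6\equiv 3\pmod 9$, one obtains
$$\sum_{m\ge 0}c(3m)q^m \equiv 3q\cdot\frac{f_2^3f_3^6P(q)f(-q,-q^5)}{f_1^{12}}\pmod 9.$$

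Since $27n+24=3(9n+8)$, the theorem is equivalent to showing
$$[q^{9n+7}]\,\frac{f_2^3f_3^6P(q)f(-q,-q^5)}{f_1^{12}}\equiv 0\pmod 3.$$
For this I would apply the standard mod-$3$ reductions $f_k^3\equiv f_{3k}$, the congruence $P(q)\equiv f_1\pmod 3$ (immediate from Wang's explicit series $P(q)=f_1(1+6\sum(\cdots))$ displayed in Lemma~\ref{le:3-dis}), and the Jacobi triple product identity $f(-q,-q^5)=f_1f_6^2/(f_2f_3)$. These collapse the bracketed series modulo $3$ to $\varphi(-q)f_3f_6^3$. Since $9n+7\equiv 1\pmod 3$, only the part of $\varphi(-q)$ corresponding to exponents $\equiv 1\pmod 3$ contributes, namely $-2qf(-q^3,-q^{15})$; the substitution $u=q^3$ then reduces the task to
$$[u^{3n+2}]\,f(-u,-u^5)f_1(u)f_2(u)^3\equiv 0 \pmod 3.$$

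The decisive final step is to apply the same batch of simplifications once more: modulo $3$, the series $f(-u,-u^5)f_1f_2^3$ collapses to $\varphi(-u)f_6\psi(u^3)$. The exponents appearing in $\varphi(-u)f_6\psi(u^3)$ are of the form $k^2+6\ell+3j(j+1)/2\equiv k^2\pmod 3$, and squares modulo $3$ are only $0$ or $1$, so no $u^{3n+2}$ term can occur. The main source of difficulty I anticipate is the bookkeeping across the two nested 3-dissections, particularly tracking the powers of $P$ and $f_3$ through the $q^3\mapsto q$ substitutions; once this is laid out cleanly, the elementary observation that squares are $0$ or $1$ modulo $3$ closes the argument.
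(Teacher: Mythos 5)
Your proof is correct, and it takes a genuinely different route from the paper's. The paper attacks $c(27n+24)=c(9(3n+2)+6)$ by first using the Baruah--Ojah $3$-dissections of $1/\varphi(-q)$ and $1/\psi(q)$ to reach $c(3n)$, then combining those with Wang's dissection of $1/f_1^3$ to compute the \emph{exact} generating function $\sum c(9n+6)q^n$ as a sum of four eta-quotients with coefficients $12$, $135$, $72$, $192$; reducing modulo $9$ kills two of them and the survivors factor as $3\psi(q)$ times a series in $q^3$, so the absence of exponents $\equiv 2\pmod 3$ in $\psi(q)$ finishes the job. You instead write $1/(f_1f_2)=\varphi(-q)/f_1^3$, so that a single application of Wang's lemma together with the elementary dissection $\varphi(-q)=\varphi(-q^9)-2qf(-q^3,-q^{15})$ isolates, already modulo $9$, the unique surviving cross-term in $\sum c(3m)q^m$; the price is that you must then dig two more $3$-adic levels (to reach $q^{9n+8}$), which you do by repeated use of $f_k^3\equiv f_{3k}$, $P(q)\equiv f_1$, and the product form $f(-q,-q^5)=f_1f_6^2/(f_2f_3)$, closing with the observation that the support of $\varphi(-u)f_6\psi(u^3)$ avoids exponents $\equiv 2\pmod 3$. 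I checked the bookkeeping you were worried about: the residue classification of the four summands, the constant $-6\equiv 3\pmod 9$, and both ``collapse'' computations ($\frac{f_2^3f_3^6P(q)f(-q,-q^5)}{f_1^{12}}\equiv\varphi(-q)f_3f_6^3$ and $f(-u,-u^5)f_1f_2^3\equiv\varphi(-u)f_6\psi(u^3)$ modulo $3$) all verify. Your version avoids the Baruah--Ojah lemma entirely and never needs an exact formula, at the cost of an extra nested dissection; the paper's version is more computational up front but terminates one step sooner.
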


\begin{proof}
We see from \eqref{cn} and Lemma \ref{le:3-dis} that
\begin{align*}%\label{c-n}
\sum_{n=0}^\infty c(n)q^n&=\frac{qf_6^3}{\varphi(-q)\psi(q)}\nonumber\\
&=qf_6^3\frac{\varphi^3(-q^9)\psi^3(q^9)}{\varphi^4(-q^3)\psi^4(q^3)}
(1+2qw(q^3)+4q^2w^2(q^3))\nonumber\\
&\quad\quad\times\left(\frac{1}{w^2(q^3)}
-\frac{q}{w(q^3)}+q^2\right).
\end{align*}

Employing Lemma \ref{le:3-dis}, we deduce that% and \eqref{c-n}, we deduce that
\begin{align}\label{eq:c3n-gf}
\sum_{n=0}^\infty c(3n)q^n&=\frac{3q\varphi^3(-q^3)\psi^3(q^3)}{f_1^3\varphi(-q)\psi(q)}\nonumber\\
&=\frac{3q\varphi^3(-q^9)\psi^3(q^9)f_9^3}{\varphi(-q^3)\psi(q^3)f_3^{12}}
\left(P^2(q^3)+3qP(q^3)f_9^3+9q^2f_9^6\right)\nonumber\\
&\quad\quad\times\left(1+2qw(q^3)+4q^2 w^2(q^3)\right)\left(\frac{1}{w^2(q^3)}-\frac{q}{w(q^3)}+q^2\right).
\end{align}

Extracting terms involving $q^{3n+2}$ and replacing $q^3$ by $q$ in \eqref{eq:c3n-gf}, it follows that
\begin{align*}
\sum_{n=0}^\infty c(9n+6)q^n=12\frac{f_2^2f_3^{21}}{f_1^{16}f_6^6}+135q\frac{f_3^{12}f_6^3}{f_1^{13}f_2}
+72q^2\frac{f_3^3f_6^{12}}{f_1^{10}f_2^4}+192q^3\frac{f_6^{21}}{f_1^7f_2^7f_3^6}.
\end{align*}
Hence,
\begin{align*}
\sum_{n=0}^\infty c(9n+6)q^n&\equiv 3\frac{f_2^2f_3^{21}}{f_1^{16}f_6^6}+3q^3\frac{f_6^{21}}{f_1^7f_2^7f_3^6}\\
&\equiv 3 \frac{f_2^2}{f_1}\left(\frac{f_3^{16}}{f_6^6}+
q^3\frac{f_6^{18}}{f_3^8}\right) \pmod{9}.
\end{align*}

Noting that $f_2^2/f_1$ contains no terms of the form $q^{3n+2}$, we have
\begin{align*}
\sum_{n=0}^\infty c(27n+24)q^n\equiv0\pmod9.
\end{align*}
It therefore ends the proof.
\end{proof}

\begin{theorem}\label{th:c45n}
For $n\geq 0$, we have
\begin{align}\label{c45n5}
c(45n+t)\equiv0\pmod5,
\end{align}
where $t=9$ and $18$.
\end{theorem}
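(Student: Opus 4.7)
Since $45n+9 = 9(5n+1)$ and $45n+18 = 9(5n+2)$, the two congruences amount to the vanishing, modulo $5$, of the coefficients of $q^{5n+1}$ and $q^{5n+2}$ in $\sum_{m\ge 0} c(9m)\,q^m$. The plan is therefore to derive a closed form for $\sum c(9m)\,q^m$ and then to $5$-dissect it.

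Starting from the identity
$$\sum_{n=0}^\infty c(3n)\,q^n \;=\; \frac{3q\,\varphi^3(-q^3)\psi^3(q^3)}{f_1^3\,\varphi(-q)\,\psi(q)} \;=\; \frac{3q\,f_3^3\,f_6^3}{f_1^4\,f_2}$$
that comes out of the proof of the previous theorem (using $\varphi(-q)\psi(q)=f_1f_2$ and $\varphi^3(-q^3)\psi^3(q^3)=f_3^3 f_6^3$), I would apply the $3$-dissection of $1/f_1^3$ from \eqref{eq:1/f13--3dis} together with those of $1/\varphi(-q)$ and $1/\psi(q)$ from \eqref{eq:1/-phi--3dis}--\eqref{eq:1/psi--3dis}, expand the product, and collect the $q^{3m}$ part. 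Replacing $q^3\mapsto q$ then produces a closed form for $\sum c(9m)\,q^m$ as an integer-coefficient combination of eta quotients in $f_1,f_3,f_9$ together with $P(q)$ and $w(q)$, exactly parallel to the derivation of $\sum c(9n+6)\,q^n$ in the preceding proof but retaining residue $0$ rather than $2$ modulo $3$.

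With this closed form in hand, the next step is to reduce modulo $5$, using $f_k^5\equiv f_{5k}\pmod{5}$ to clear the high powers of $f_1$ and $f_2$ that survive in the denominator. A $5$-dissection of the resulting expression -- most naturally carried out via Lemma \ref{psi-p} with $p=5$ applied to the $\psi$-factors and via Lemma \ref{cg13} with $p=5$ applied to the $f(-q)$-factors -- then splits the series into five residue classes modulo $5$, and the aim is to show that the pieces indexed by residues $1$ and $2$ are identically zero. By analogy with the mod-$9$ argument in the preceding proof, where $\psi(q)$ was factored out so that its lack of $q^{3n+2}$ terms produced the vanishing, one may hope to isolate a factor such as $q^4\varphi(-q)$, whose support modulo $5$ is exactly $\{0,3,4\}$, so that the residues $1$ and $2$ are avoided simultaneously.

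The main obstacle is precisely this last packaging step. The $3$-dissection of $\sum c(3n)\,q^n$ produces many cross terms built from $P(q^3)$, $f_9^3$, and $w(q^3)$, and tracking each of them through the reduction modulo $5$ and the subsequent $5$-dissection requires careful bookkeeping; without a clean factorisation the vanishing of the two target residues is not manifest. If the factor-out-a-theta-function trick fails to apply directly, one can appeal to the Alaca--Williams $(p,k)$-parametrization of Lemma \ref{le:AW} at level $q^5$ to reduce both vanishing statements to polynomial identities in $p(q^5)$ and $k(q^5)$ that can then be verified mechanically.
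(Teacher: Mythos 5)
Your plan follows the paper's route almost exactly: pass from \eqref{eq:c3n-gf} to $\sum_{n\ge0}c(9n)q^n$, reduce modulo $5$, and factor out a theta series whose support avoids the residues $1$ and $2$. But you stop precisely at the step that carries all the content, flagging the ``packaging'' as an unresolved obstacle, so as written the argument is incomplete. The resolution is that no delicate bookkeeping is required: extracting the $q^{3n}$ terms from \eqref{eq:c3n-gf} and replacing $q^3$ by $q$ gives
\begin{align*}
\sum_{n=0}^\infty c(9n)q^n=45q\frac{f_2f_3^{18}}{f_1^{15}f_6^3}+90q^2
\frac{f_3^9f_6^6}{f_1^{12}f_2^2}+288q^3\frac{f_6^{15}}{f_1^9f_2^5},
\end{align*}
and the first two coefficients are divisible by $5$, so modulo $5$ only the last term survives. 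Writing $f_1^{-9}=f_1/f_1^{10}\equiv f_1/f_5^2$, $f_2^5\equiv f_{10}$, $f_6^{15}\equiv f_{30}^3$, and $288\equiv 3\pmod 5$, this collapses to
\begin{align*}
\sum_{n=0}^\infty c(9n)q^n\equiv 3q^3f_1\frac{f_{30}^{3}}{f_5^2f_{10}}\pmod 5.
\end{align*}
The theta function that does the work is therefore $f(-q)=f_1$, not your proposed $q^4\varphi(-q)$: the exponents $k(3k+1)/2$ in $f_1$ satisfy $(k+1)^2\equiv 3\cdot k(3k+1)+1\pmod 5$, and since $2$ and $3$ are quadratic nonresidues modulo $5$ these exponents never lie in the classes $3$ or $4$; hence $q^3f_1$ is supported on residues $0,3,4$, the cofactor is a series in $q^5$, and the coefficients of $q^{5n+1}$ and $q^{5n+2}$ vanish, which is exactly \eqref{c45n5}. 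Once this single-term collapse is observed, your fallback via Lemma \ref{le:AW} and the $5$-dissections via Lemmas \ref{psi-p} and \ref{cg13} are unnecessary.
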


\begin{proof}
%We see from \eqref{cn} and Lemma \ref{le:3-dis} that
%\begin{align*}
%\sum_{n=0}^\infty c(n)q^n&=\frac{qf_6^3}{\varphi(-q)\psi(q)}\\
%&=qf_6^3\frac{\varphi^3(-q^9)\psi^3(q^9)}{\varphi^4(-q^3)\psi^4(q^3)}
%(1+2qw(q^3)+4q^2w^2(q^3))\\
%&\quad\quad\times\left(\frac{1}{w^2(q^3)}
%-\frac{q}{w(q^3)}+q^2\right).
%\end{align*}
Referring to \eqref{eq:c3n-gf}, we have
\begin{align*}
\sum_{n=0}^\infty c(9n)q^n=45q\frac{f_2f_3^{18}}{f_1^{15}f_6^3}+90q^2
\frac{f_3^9f_6^6}{f_1^{12}f_2^2}+288q^3\frac{f_6^{15}}{f_1^9f_2^5}.
\end{align*}
Hence,
\begin{align*}
\sum_{n=0}^\infty c(9n)q^n\equiv3q^3f_1\frac{f_{30}^{3}}{f_5^2f_{10}}\pmod5.
\end{align*}

Since $f_1$ contains no terms of the form $q^{5n+3}$ and $q^{5n+4}$, we have
$$c(9(5n+1))=c(45n+9)\equiv0\pmod5,$$
and
$$c(9(5n+2))=c(45n+18)\equiv0\pmod5.$$
This yields that \eqref{c45n5}.
\end{proof}

%\begin{theorem}
%For $n\geq0$, we have
%\begin{align}
%%c(63n+30)\equiv0\pmod7,\\
%%c(63n+48)\equiv0\pmod7,\\
%%c(63n+57)\equiv0\pmod7.
%c(63n+t)\equiv0\pmod7,
%\end{align}
%where $t=30$, $48$ and $57$.
%\end{theorem}

\begin{corollary}\label{co:c45n}
For $n\geq 0$, we have
\begin{align}
c(45n+t)\equiv0\pmod{15},
\end{align}
where $t=9$ and $18$.
\end{corollary}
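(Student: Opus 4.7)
The plan is to upgrade the modulo $5$ congruence of Theorem \ref{th:c45n} to a modulo $15$ congruence by separately establishing a modulo $3$ congruence on the same arithmetic progressions, and then combining the two via the Chinese Remainder Theorem.

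First, I would revisit the intermediate identity
\begin{align*}
\sum_{n=0}^\infty c(9n)q^n = 45q\frac{f_2 f_3^{18}}{f_1^{15} f_6^3} + 90q^2 \frac{f_3^9 f_6^6}{f_1^{12} f_2^2} + 288 q^3 \frac{f_6^{15}}{f_1^9 f_2^5},
\end{align*}
which was already derived in the proof of Theorem \ref{th:c45n}. The key observation is that each of the coefficients $45 = 9 \cdot 5$, $90 = 9 \cdot 10$, and $288 = 9 \cdot 32$ is divisible by $9$. Consequently $c(9n) \equiv 0 \pmod 9$ for every $n \geq 0$, and a fortiori $c(9n) \equiv 0 \pmod 3$.

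Since $45n+9 = 9(5n+1)$ and $45n+18 = 9(5n+2)$, this immediately yields $c(45n+t) \equiv 0 \pmod 3$ for $t=9$ and $t=18$. Combining this with Theorem \ref{th:c45n}, which already gives $c(45n+t) \equiv 0 \pmod 5$, and noting that $\gcd(3,5)=1$, the Chinese Remainder Theorem delivers $c(45n+t) \equiv 0 \pmod{15}$, as claimed.

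There is no genuine obstacle here: all of the heavy $q$-series manipulation was absorbed into the proof of Theorem \ref{th:c45n}, and the corollary reduces to reading off a common factor of $9$ from the three coefficients appearing in the expansion of $\sum c(9n)q^n$, then invoking the Chinese Remainder Theorem. In fact the very same reasoning gives the stronger congruence $c(45n+t) \equiv 0 \pmod{45}$; I would simply state the result as a corollary at the announced level of modulus $15$ to match the given statement.
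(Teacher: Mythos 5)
Your proof is correct and follows essentially the same strategy as the paper: establish divisibility by $3$ on the progressions $45n+9$ and $45n+18$ and combine it with the modulo-$5$ congruence of Theorem~\ref{th:c45n}. The only difference is the source of the factor of $3$ --- the paper quotes $c(3n)\equiv 0\pmod{3}$ (immediate from \eqref{eq:c3n-gf}), whereas you read $c(9n)\equiv 0\pmod{9}$ off the coefficients $45$, $90$, $288$ in the $9$-dissection, which in fact yields the stronger congruence $c(45n+t)\equiv 0\pmod{45}$.
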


\begin{proof}
We know from \cite[Theorem 4.2]{Choi-Kim-2012} that
$$c(3n)\equiv 0 \pmod{3}.$$
In fact, it is a direct consequence of \eqref{eq:c3n-gf}. Hence, Corollary \ref{co:c45n} follows by Theorem \ref{th:c45n}.
\end{proof}

\section{Congruences for $d(n)$}\label{sec:dn}

\begin{theorem}
For $n\geq0$, $\alpha\geq1$, and prime $p\ge 3$,
\begin{align}
d\left(2p^{2\alpha}+\frac{(8j+p)p^{2\alpha-1}-1}{4}\right)\equiv0\pmod2,
\end{align}
where $j=1$, $2$, $\cdots$, $p-1$.
\end{theorem}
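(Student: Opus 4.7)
The plan is to reduce $\sum_{n\ge 0} d(n) q^n$ modulo~$2$ on even indices to Ramanujan's $\psi$-function, and then to iterate the Cui--Gu $p$-dissection of $\psi$ furnished by Lemma~\ref{psi-p}. (I read the index as $2p^{2\alpha}n + \tfrac{(8j+p)p^{2\alpha-1}-1}{4}$, the $n$ in the displayed argument being a typo.)

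\textbf{Step 1 (reduction to $\psi$).} Applying~\eqref{eq:f33f1},
$$\sum_{n\ge 0} d(n) q^n = \frac{f_3^3}{f_1 f_2} = \frac{f_4^3 f_6^2}{f_2^3 f_{12}} + q\,\frac{f_{12}^3}{f_2 f_4}.$$
Extracting the even part and sending $q^2 \mapsto q$ gives $\sum_{n\ge 0} d(2n) q^n = f_2^3 f_3^2/(f_1^3 f_6)$. Modulo~$2$, the congruences $f_k^2 \equiv f_{2k}$ yield $f_2^3 \equiv f_1^6$ and $f_3^2 \equiv f_6$, so the right side collapses to $f_1^3$; Jacobi's identity $f_1^3 = \sum_{k\ge 0}(-1)^k(2k+1)q^{k(k+1)/2}$ then gives $f_1^3 \equiv \psi(q) \pmod 2$. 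Thus
$$\sum_{n\ge 0} d(2n) q^n \equiv \psi(q) \pmod 2. \qquad (\ast)$$

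\textbf{Step 2 ($p$-dissection, base case).} Insert Lemma~\ref{psi-p} into $(\ast)$. By the non-coincidence claim $(k^2+k)/2 \not\equiv (p^2-1)/8 \pmod p$, the summand $q^{(p^2-1)/8}\psi(q^{p^2})$ is the only dissection piece supported on the residue class $(p^2-1)/8 \pmod p$, and inside that class only $n \equiv (p^2-1)/8 \pmod{p^2}$ gives a nonzero coefficient. Parametrizing the non-distinguished residues as $n = p^2 N + jp + (p^2-1)/8$ for $j=1,\ldots,p-1$ and computing $2n = 2p^2 N + ((8j+p)p-1)/4$ yields the $\alpha=1$ case of the theorem. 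Isolating the distinguished residue, dividing by $q^{(p^2-1)/8}$, and substituting $q^{p^2}\mapsto q$ produces the self-similar identity
$$\sum_{M\ge 0} d\!\left(2p^2 M + \frac{p^2-1}{4}\right) q^M \equiv \psi(q) \pmod 2.$$

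\textbf{Step 3 (induction on $\alpha$).} Assuming inductively
$$\sum_{M\ge 0} d\!\left(2p^{2\alpha} M + \frac{p^{2\alpha}-1}{4}\right) q^M \equiv \psi(q) \pmod 2,$$
one more pass of Step~2, combined with the telescoping
$$p^{2\alpha}\cdot\frac{p^2-1}{4} + \frac{p^{2\alpha}-1}{4} = \frac{p^{2\alpha+2}-1}{4},$$
delivers $d(2 p^{2\alpha+2} N + ((8j+p) p^{2\alpha+1}-1)/4) \equiv 0 \pmod 2$ for $j=1,\ldots,p-1$ and simultaneously regenerates the same recurrence at level $\alpha+1$, closing the induction.

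The main obstacle is the index bookkeeping in Steps~2 and~3: verifying that extracting the non-distinguished progression at each level produces exactly the shift $((8j+p)p^{2\alpha-1}-1)/4$ claimed, and that the distinguished shift $(p^{2\alpha}-1)/4$ telescopes precisely through the iteration. Everything else is a direct application of~\eqref{eq:f33f1}, Jacobi's identity, and Lemma~\ref{psi-p}.
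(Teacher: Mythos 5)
Your proposal is correct and follows essentially the same route as the paper: reduce $\sum_{n\ge 0} d(2n)q^n$ to $\psi(q)$ modulo $2$, then iterate the Cui--Gu $p$-dissection of $\psi$ from Lemma~\ref{psi-p} and induct on $\alpha$ (your reading of the missing $n$ in the statement as a typo is also right). The only cosmetic difference is that you obtain $\sum_{n\ge0} d(2n)q^n\equiv\psi(q)\pmod 2$ via \eqref{eq:f33f1} together with Jacobi's identity, whereas the paper first writes $f_3^3/(f_1f_2)\equiv f_6\,f_3/f_1^3\pmod 2$ and applies \eqref{eq:f3f13}; both land in the same place.
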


\begin{proof}
From \eqref{dn}, one can see
\begin{align*}
\sum_{n=0}^\infty d(n)q^n=\frac{f_3^3}{f_1 f_2}\equiv f_6\frac{f_3}{f_1^3}\pmod2.
\end{align*}

With the help of \eqref{eq:f3f13}, we have
\begin{align*}
\sum_{n=0}^\infty d(n)q^n\equiv\frac{f_4^6f_6^4}{f_2^9f_{12}^2}+3q\frac{f_4^2f_6^2f_{12}^2}{f_2^7}\pmod2.
\end{align*}
Hence,
\begin{align*}
\sum_{n=0}^\infty d(2n)q^n\equiv\frac{f_2^6f_3^4}{f_1^9f_{6}^2}\equiv\psi(q)\pmod2.
\end{align*}

Invoking Lemma \ref{psi-p}, for any odd prime $p$, we derive that
\begin{align*}
\sum_{n=0}^\infty d \left(2\left(pn+\frac{p^2-1}{8}\right)\right)q^n\equiv\psi(q^p)\pmod2,
\end{align*}
and
\begin{align*}
\sum_{n=0}^\infty d \left(2\left(p^2n+\frac{p^2-1}{8}\right)\right)q^n\equiv\psi(q)\pmod2.
\end{align*}
Furthermore,
\begin{align*}
\sum_{n=0}^\infty d \left(2p^{3}n+\frac{p^4-1}{4}\right)q^n\equiv\psi(q^p)\pmod2.
\end{align*}

It therefore follows by induction on $\alpha$ that for $\alpha\geq1$,
\begin{align*}
\sum_{n=0}^\infty d \left(2p^{2\alpha-1}n+\frac{p^{2\alpha}-1}{4}\right)q^n\equiv\psi(q^p)\pmod2.
\end{align*}
Thus, for $j=1$, $2$, $\cdots$, $p-1$,
\begin{align*}
d \left(2p^{2\alpha-1}(pn+j)+\frac{p^{2\alpha}-1}{4}\right)\equiv0\pmod2,
\end{align*}
which is the desired result.
\end{proof}

\begin{theorem}
For $n\geq 0$, $\alpha\geq 1$, and prime $p\geq 5$, we have
\begin{align}\label{cmod3}
d\left(6p^{2\alpha}n+\frac{(24j+p)p^{2\alpha-1}-1}{4}\right)\equiv0\pmod3,
\end{align}
where $j=1$, $2$, $\cdots$, $p-1$.
\end{theorem}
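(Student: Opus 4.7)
The approach parallels the treatment of $b(n)$: reduce the generating function to a Ramanujan theta function modulo $3$, and then iterate the Cui--Gu $p$-dissection from Lemma~\ref{cg13}.

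First I would isolate the even part of $\sum_{n\ge 0} d(n)q^n = f_3^3/(f_1 f_2)$. Writing $f_3^3/(f_1 f_2) = (1/f_2)\cdot(f_3^3/f_1)$ and substituting the $2$-dissection \eqref{eq:f33f1} collects the even powers of $q$ into
\[
\sum_{n\ge 0} d(2n)\,q^n = \frac{f_2^3 f_3^2}{f_1^3 f_6}.
\]
Using the elementary congruence $f_k^3 \equiv f_{3k} \pmod 3$, this simplifies to $\sum_{n\ge 0} d(2n) q^n \equiv f_3 \pmod 3$. Since $f_3$ is supported on exponents divisible by $3$, extracting the $q^{3m}$ terms and replacing $q^3 \to q$ yields the pivotal congruence
\[
\sum_{m\ge 0} d(6m)\,q^m \equiv f_1 = f(-q) \pmod 3.
\]

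Next I would mirror the induction used in the $b(n)$ theorem, now applied to $f(-q)$. Applying Lemma~\ref{cg13} and extracting the residue class $q^{pn+(p^2-1)/24}$, the non-congruence assertion of Lemma~\ref{cg13} kills every other class and, after $q^p \to q$, produces
\[
\sum_{n\ge 0} d\!\left(6pn + \tfrac{p^2-1}{4}\right) q^n \equiv (-1)^{(\pm p-1)/6}\, f(-q^p) \pmod 3.
\]
Because $f(-q^p)$ is supported only on exponents divisible by $p$, this immediately delivers the $\alpha=1$ case $d(6p^2 n + 6pj + (p^2-1)/4) \equiv 0 \pmod 3$ for $j=1,\ldots,p-1$. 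Extracting the surviving $p$-divisible exponents and applying $q^p \to q$ once more returns an identity of the shape $\sum_n d(6p^2 n + (p^2-1)/4)q^n \equiv \pm f(-q) \pmod 3$, which is the correct input to iterate: after $\alpha$ full cycles one obtains $\sum_n d(6p^{2\alpha}n + (p^{2\alpha}-1)/4)q^n \equiv (-1)^{\alpha(\pm p-1)/6} f(-q) \pmod 3$ by a straightforward induction, using the telescoping identity $p^{2\alpha-2}(p^2-1)/4 + (p^{2\alpha-2}-1)/4 = (p^{2\alpha}-1)/4$ to advance the shift.

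The bulk of the remaining work is bookkeeping: matching $((24j+p)p^{2\alpha-1}-1)/4 = 6jp^{2\alpha-1} + (p^{2\alpha}-1)/4$ to what the half-cycle outputs, verifying that $24 \mid p^{2\alpha}-1$ for $p\ge 5$ so that all shifts are nonnegative integers, and tracking the sign $(-1)^{\alpha(\pm p-1)/6}$. I do not anticipate any genuine obstacle beyond this computational matching, since the two engines of the argument --- Lemma~\ref{cg13} and the mod-$3$ collapse $f_k^3 \equiv f_{3k}$ --- are already in place in the preliminaries.
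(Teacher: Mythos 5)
Your argument is correct, and its second half --- the Cui--Gu $p$-dissection of $f(-q)$ from Lemma \ref{cg13} together with the induction on $\alpha$ driven by the telescoping $p^{2\alpha-2}(p^2-1)/4+(p^{2\alpha-2}-1)/4=(p^{2\alpha}-1)/4$ --- coincides with the paper's. Where you genuinely differ is in how you reach the pivotal congruence $\sum_{n\ge 0}d(6n)q^n\equiv f(-q)\pmod 3$. The paper first applies the $3$-dissections \eqref{eq:1/-phi--3dis} and \eqref{eq:1/psi--3dis} to $f_3^3/(\varphi(-q)\psi(q))$, extracts the $d(3n)$ generating function, and only then invokes the $2$-dissection \eqref{eq:f33f1} to obtain $\sum_{n\ge0} d(6n)q^n=\frac{f_2^9f_3^3}{f_1^8f_6^3}+3q\frac{f_2f_6^5}{f_1^2f_3}\equiv f_1\pmod 3$. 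You instead apply \eqref{eq:f33f1} directly to $\frac{1}{f_2}\cdot\frac{f_3^3}{f_1}$ (legitimate, since $1/f_2$ is even in $q$), getting the exact identity $\sum_{n\ge0} d(2n)q^n=\frac{f_2^3f_3^2}{f_1^3f_6}\equiv f_3\pmod 3$, and then read off $\sum_{n\ge0} d(6n)q^n\equiv f_1\pmod 3$ from the fact that $f_3$ is supported on exponents divisible by $3$. Your route is shorter and more elementary, bypassing the Baruah--Ojah machinery of Lemma \ref{le:3-dis} entirely for this theorem, and it yields $d(6n+2)\equiv d(6n+4)\equiv0\pmod 3$ as a free byproduct; the paper's route has the advantage that the $3$-dissection \eqref{d-n} it sets up here is reused verbatim to extract $d(3n+2)$ in the proof of Theorem \ref{th:d-mod9}. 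Your final bookkeeping is also right: $\bigl((24j+p)p^{2\alpha-1}-1\bigr)/4=6jp^{2\alpha-1}+(p^{2\alpha}-1)/4$, and $24\mid p^2-1$ for every prime $p\ge5$.
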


\begin{proof}
It follows by \eqref{eq:1/-phi--3dis} and \eqref{eq:1/psi--3dis} that
\begin{align}\label{d-n}
\sum_{n=0}^\infty d(n)&=\frac{f_3^3}{\varphi(-q)\psi(q)}\nonumber\\
&=f_3^3 \frac{\varphi^3(-q^9)\psi^3(q^9)}{\varphi^4(-q^3)\psi^4(q^3)}\left(1+2qw(q^3)+4q^2 w^2(q^3)\right)\nonumber\\
&\quad\quad\times\left(\frac{1}{w^2(q^3)}
-\frac{q}{w(q^3)}+q^2\right).
\end{align}

So we get
\begin{align*}
\sum_{n=0}^\infty d(3n)q^n&=f_1^3\frac{\varphi^3(-q^3)\psi^3(q^3)}{\varphi^4(-q)\psi^4(q)}
\left(\frac{1}{w^2(q)}-2qw(q)\right)\\
&=\frac{1}{f_2^2f_6^3}\left(\frac{f_3^3}{f_1}\right)^3-2q\frac{f_6^6}{f_2^5}.
\end{align*}
Based on \eqref{eq:f33f1}, we derive that
\begin{align*}
\sum_{n=0}^\infty d(6n)q^n=\frac{f_2^9f_3^3}{f_1^8f_6^3}+3q\frac{f_2f_6^5}{f_1^2f_3}\equiv\frac{f_2^9f_3^3}{f_1^8f_6^3}
\equiv f_1\pmod3.
\end{align*}

Invoking Lemma \ref{cg13}, we arrive at that, for any prime $p\geq5$,
\begin{align*}
\sum_{n=0}^\infty d\left(6\left(pn+\frac{p^2-1}{24}\right)\right)q^n\equiv(-1)^{\frac{\pm p-1}{6}} f(-q^p)\pmod3,
\end{align*}
and
\begin{align*}
\sum_{n=0}^\infty d\left(6\left(p^2n+\frac{p^2-1}{24}\right)\right)q^n\equiv (-1)^{\frac{\pm p-1}{6}} f(-q)\pmod3.
\end{align*}
Furthermore, we have
\begin{align*}
\sum_{n=0}^\infty d\left(6\left(p^2\left(pn+\frac{p^2-1}{24}\right)+\frac{p^2-1}{24}\right)\right)q^n\equiv f(-q^p)\pmod3.
\end{align*}
Namely,
\begin{align*}
\sum_{n=0}^\infty d\left(6p^3n+\frac{p^4-1}{4}\right)q^n\equiv f(-q^p)\pmod3.
\end{align*}

Thus, by induction on $\alpha$, we derive that, for $\alpha\geq1$,
\begin{align*}
\sum_{n=0}^\infty d\left(6p^{2\alpha-1}n+\frac{p^{2\alpha}-1}{4}\right)q^n\equiv (-1)^{\alpha\left(\frac{\pm p-1}{6}\right)} f(-q^p)\pmod3.
\end{align*}
This yields that, for $j=1$, $2$, $\cdots$, $p-1$,
\begin{align*}
d\left(6p^{2\alpha-1}(pn+j)+\frac{p^{2\alpha}-1}{4}\right)\equiv0\pmod3,
\end{align*}
which implies \eqref{cmod3}.
\end{proof}

\begin{theorem}\label{th:d-mod9}
For $n\geq0$, $\alpha\geq1$, and prime $p\ge 5$,
\begin{align}\label{d-mod9}
d\left(6p^{2\alpha}n+\frac{(24j+9p)p^{2\alpha-1}-1}{4}\right)\equiv 0\pmod9,
\end{align}
where $j=1$, $2$, $\cdots$, $p-1$.
\end{theorem}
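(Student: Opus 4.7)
The congruence parallels the preceding ($\bmod\,3$) theorem for $d(n)$, but with $\sum d(6n+2)q^n$ taking the place of $\sum d(6n)q^n$: indeed, $\frac{9p^{2\alpha}-1}{4}\equiv 2\pmod 6$ for every prime $p\geq 5$, so all target indices are $\equiv 2\pmod 6$. The plan is to obtain a compact closed form for $\sum d(6n+2)q^n$, collapse it modulo $9$, and then iterate Lemma \ref{cg13}.

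For the closed form, extracting the $q^{3n+2}$ part of $\sum d(n)q^n$ from the 3-dissection in \eqref{d-n} gives
\begin{equation*}
\sum_{n\geq 0}d(3n+2)q^n=\frac{3\,f_3^3 f_6^3}{f_1 f_2^4}.
\end{equation*}
Since $f_6^3/f_2^4$ is already a series in $q^2$, a single application of the 2-dissection \eqref{eq:f33f1} to $f_3^3/f_1$, selection of the even-$q$ part, and the substitution $q^2\mapsto q$ yield
\begin{equation*}
\sum_{n\geq 0}d(6n+2)q^n=\frac{3\,f_2^3 f_3^5}{f_1^6 f_6}.
\end{equation*}

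Next, the Frobenius-style congruences $f_1^3\equiv f_3$ and $f_2^3\equiv f_6$ modulo $3$ give $f_1^6\equiv f_3^2$, $f_2^3\equiv f_6$, and $f_3^5\equiv f_9 f_3^2$, so the ratio collapses to $f_9$ and
\begin{equation*}
\sum_{n\geq 0}d(6n+2)q^n\equiv 3\,f_9 \pmod 9.
\end{equation*}
Rewriting the target index as $6M+2$ with $M=p^{2\alpha-1}(pn+j)+\frac{3(p^{2\alpha}-1)}{8}$ and setting $s_0\equiv \frac{9(p^2-1)}{24}\pmod p$, Lemma \ref{cg13} applied to $f_9=f(-q^9)$ yields a $p$-dissection in which only the main term lies in the residue class $s_0\pmod p$: this uses the Cui--Gu distinctness claim together with the fact that $9$ is a unit mod $p$ for $p\geq 5$. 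Extracting $q^{pN+s_0}$ and absorbing the resulting shift $u=(\tfrac{9(p^2-1)}{24}-s_0)/p$ via the bookkeeping identity $6(pu+s_0)+2=\frac{9p^2-1}{4}$ sets up the inductive pattern that mirrors the $\bmod\,3$ proof. Induction on $\alpha$ then produces
\begin{equation*}
\sum_{n\geq 0}d\!\left(6p^{2\alpha-1}n+\frac{9p^{2\alpha}-1}{4}\right)q^n\equiv 3(-1)^{\alpha\cdot(\pm p-1)/6} f(-q^{9p})\pmod 9.
\end{equation*}

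To conclude, extracting $[q^{pn+j}]$ for $j=1,\ldots,p-1$: since every exponent in $f(-q^{9p})$ is a multiple of $p$ while $pn+j$ is not, this coefficient vanishes modulo $9$, establishing \eqref{d-mod9}. The main obstacle is the opening step, namely identifying and verifying the compact identity $\sum d(6n+2)q^n=\frac{3 f_2^3 f_3^5}{f_1^6 f_6}$; once this identity is in hand, the Frobenius collapse and the inductive Cui--Gu argument run in direct analogy with the preceding theorem, with $f_9$ now playing the role of $f_1$ and $f(-q^{9p})$ the role of $f(-q^p)$.
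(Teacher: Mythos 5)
Your proposal is correct and follows essentially the same route as the paper: extract $\sum d(3n+2)q^n=\frac{3f_3^3f_6^3}{f_1f_2^4}$ from the 3-dissection \eqref{d-n}, apply \eqref{eq:f33f1} to obtain $\sum d(6n+2)q^n=\frac{3f_2^3f_3^5}{f_1^6f_6}\equiv 3f_9\pmod 9$, and then iterate Lemma \ref{cg13} with induction on $\alpha$. Your explicit justification of the collapse to $3f_9$ via the mod-3 Frobenius congruences is a detail the paper leaves implicit, but the argument is otherwise identical.
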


\begin{proof}
%It follows by \eqref{eq:1/-phi--3dis} and \eqref{eq:1/psi--3dis} that
%\begin{align*}
%\sum_{n=0}^\infty d(n)&=\frac{f_3^3}{\varphi(-q)\psi(q)}\\
%&=f_3^3 \frac{\varphi^3(-q^9)\psi^3(q^9)}{\varphi^4(-q^3)\psi^4(q^3)}\left(1+2qw(q^3)+4q^2 w^2(q^3)\right)\\
%&\quad\quad\times\left(\frac{1}{w^2(q^3)}
%-\frac{q}{w(q^3)}+q^2\right).
%\end{align*}
Extracting terms involving $q^{3n+2}$ and replace $q^3$ by $q$ in \eqref{d-n}, then we derive that
\begin{equation}\label{eq:d3n+2-gf}
\sum_{n=0}^\infty d(3n+2)q^n=\frac{3f_3^3 f_6^3}{\varphi(-q)\psi(q)f_2^3}=\frac{3f_3^3 f_6^3}{f_1 f_2^4}.
\end{equation}

It follows by \eqref{eq:f33f1} that,
\begin{align*}
\sum_{n=0}^\infty d(3n+2)q^n=3\frac{f_3^3f_6^3}{f_1f_2^4}=3\frac{f_4^3f_6^5}{f_2^6f_{12}}
+3q\frac{f_6^3f_{12}^3}{f_2^4f_4}.
\end{align*}
Hence,
\begin{align*}
\sum_{n=0}^\infty d(6n+2)q^n=3\frac{f_2^3f_3^5}{f_1^6f_6}\equiv 3f_9\pmod{9}.
%\equiv3\frac{f_6f_3^2f_9}{f_3^2f_6}
\end{align*}

In view of Lemma \ref{cg13}, for any prime $p\geq5$, we deduce that
$$\sum_{n=0}^\infty d\left(6\left(pn+\frac{3(p^2-1)}{8}\right)+2\right)q^n\equiv 3(-1)^{\frac{\pm p-1}{6}} f(-q^{9p})\pmod9,$$
and
$$\sum_{n=0}^\infty d\left(6\left(p^2n+\frac{3(p^2-1)}{8}\right)+2\right)q^n\equiv 3(-1)^{\frac{\pm p-1}{6}} f(-q^9)\pmod9.$$
Moreover,
\begin{align*}
\sum_{n=0}^\infty d\left(6\left(p^3n+\frac{3(p^4-1)}{8}\right)+2\right)q^n&\equiv 3f(-q^{9p})\pmod9.
\end{align*}

Hence, by induction on $\alpha\ge 1$, we arrive at,
\begin{align*}
\sum_{n=0}^\infty d\left(6\left(p^{2\alpha-1}n+\frac{3(p^{2\alpha}-1)}{8}\right)+2\right)q^n\equiv 3(-1)^{\alpha\left(\frac{\pm p-1}{6}\right)} f(-q^{9p})\pmod9,
\end{align*}
which implies that for $j=1$, $2$, $\cdots$, $p-1$,
\begin{align*}
d\left(6\left(p^{2\alpha-1}(pn+j)+\frac{3(p^{2\alpha}-1)}{8}\right)+2\right)\equiv 0\pmod9.
\end{align*}
This leads to \eqref{d-mod9}.
\end{proof}

\begin{theorem}\label{th:d-mod5}
For $n\geq0$, we have
\begin{equation}
d(45n+t)\equiv 0 \pmod{5},
\end{equation}
where $t=17$ and $35$.
\end{theorem}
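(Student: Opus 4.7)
The congruence $d(45n+17) \equiv 0 \equiv d(45n+35) \pmod{5}$ can be recast: since $45n+17 = 9(5n+1)+8$ and $45n+35 = 9(5n+3)+8$, it suffices to show $d(9m+8) \equiv 0 \pmod{5}$ whenever $m \equiv 1 \pmod{5}$ or $m \equiv 3 \pmod{5}$. My plan is to compute $\sum_{m \geq 0} d(9m+8)q^m$ modulo $5$ in a form whose $5$-residue gaps occur exactly at these two residues.

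I would begin from \eqref{eq:d3n+2-gf}, $\sum_{n\geq 0} d(3n+2) q^n = \frac{3 f_3^3 f_6^3}{f_1 f_2^4}$, and reduce modulo $5$ using the Frobenius-like congruence $f_k^5 \equiv f_{5k} \pmod{5}$. For instance, $1/f_2^4 = f_2/f_2^5 \equiv f_2/f_{10} \pmod{5}$, giving $\sum d(3n+2)q^n \equiv \frac{3 f_2 f_3^3 f_6^3}{f_1 f_{10}} \pmod{5}$. Next, I would extract the $n \equiv 2 \pmod{3}$ subseries, which is $\sum d(9m+8)q^m$. Here $f_3^3$ and $f_6^3$ are $q^3$-power series and so are inert under the mod-$3$ dissection, leaving the task of $3$-dissecting the non-$q^3$ part. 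For this I would use the $3$-dissection formulas of Lemma~\ref{le:3-dis} --- especially \eqref{eq:1/f13--3dis} for $1/f_1^3$ and \eqref{eq:1/-phi--3dis} for $1/\varphi(-q) = f_2/f_1^2$ --- together with the Alaca--Williams $(p,k)$-parametrization of Lemma~\ref{le:AW} to manage the resulting eta-quotients algebraically.

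Having obtained an explicit closed form for $\sum d(9m+8)q^m \pmod{5}$, I would simplify (once more via $f_k^5 \equiv f_{5k}$) into a sum of terms of the shape $c \cdot q^r \cdot \theta(q) \cdot g(q^5) \pmod{5}$, where $\theta$ is a theta-like eta factor with a specific $5$-residue gap. A natural candidate is $f_2 = f(-q^2)$: by Euler's pentagonal theorem, its exponents $n(3n+1)$ lie only in $\{0,2,4\} \pmod{5}$, so $f_2$ has no $q^{5k+1}$ or $q^{5k+3}$ terms, and any expression of the form $c \cdot f_2 \cdot (\text{series in }q^5) \pmod{5}$ therefore vanishes at exactly the residues we need. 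Alternatively, one can use $\psi(q)$ shifted by $q^4$: Lemma~\ref{psi-p} shows $\psi(q)$ has residues $\{0,1,3\} \pmod{5}$, so $q^4 \psi(q)$ has residues $\{0,2,4\}$ and vanishes at $\{1,3\}$. The relevant gap statements are read off from Lemmas~\ref{psi-p} and~\ref{cg13}.

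The hard part will be the $3$-dissection step after the mod-$5$ reduction, since obtaining a sufficiently compact closed form to display the targeted gap structure requires juggling several eta-quotient identities. The Alaca--Williams parametrization of Lemma~\ref{le:AW} --- highlighted in the introduction to this section --- is the decisive tool here, as it reduces checking the requisite identities to algebraic manipulation of rational functions of $p$ and $k$.
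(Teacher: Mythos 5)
Your plan follows essentially the same route as the paper: starting from \eqref{eq:d3n+2-gf}, performing the $3$-dissection to reach $\sum d(9n+8)q^n$, invoking the Alaca--Williams parametrization of Lemma~\ref{le:AW} to verify the key mod-$5$ identity, and concluding via the gap structure of $f_2$ (no exponents $\equiv 1,3 \pmod 5$), which is exactly how the paper isolates $d(45n+17)$ and $d(45n+35)$. The only thing not carried out is the explicit computation showing that the dissected series is congruent to $3f_2$ times a series in $q^5$ (the paper's Lemma~\ref{le:H}), but you have correctly identified both that this is the crux and the tool that settles it.
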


\begin{proof}
From \eqref{eq:d3n+2-gf}, we have
$$\sum_{n=0}^\infty d(3n+2)q^n=\frac{3f_3^3 f_6^3}{\varphi(-q)\psi(q)f_2^3}.$$

Again by \eqref{eq:1/-phi--3dis}, \eqref{eq:1/psi--3dis} and \eqref{eq:1/f13--3dis}, we have
$$\sum_{n=0}^\infty d(9n+8)q^n=f_2\cdot H,$$
where
\begin{align*}
H&=\left(\frac{9f_3^9 f_4 f_6^9}{f_1^3 f_2^{13} f_{12}^3}+\frac{9 f_3^3 f_4^2 f_6^{18}}{f_1 f_2^{16} f_{12}^6}\right)+q\left(\frac{27 f_3^6 f_6^9}{f_1^2 f_2^{13}}-\frac{18 f_4 f_6^{18}}{f_2^{16} f_{12}^3}\right)\\
&\quad\quad+q^2\left(\frac{36 f_3^9 f_{12}^6}{f_1^{3} f_2^{10} f_4^2}+\frac{72 f_3^3 f_6^9 f_{12}^3}{f_1 f_2^{13} f_4}+\frac{108 f_1 f_6^{18}}{f_{2}^{16} f_3^3}\right)\\
&\quad\quad-q^3 \frac{72 f_6^9 f_{12}^6}{f_{2}^{13} f_4^2}+q^4\frac{144 f_3^3 f_{12}^{12}}{f_1 f_2^{10} f_4^4}.
\end{align*}

We next show a surprising congruence.
\begin{lemma}\label{le:H}
It holds that
\begin{equation}\label{eq:H-mod5}
H\equiv 3 \frac{f_{15}^3}{f_5 f_{10}^2} \pmod{5}.
\end{equation}
\end{lemma}

\begin{proof}[Proof of Lemma \ref{le:H}]
To prove \eqref{eq:H-mod5}, it suffices to show
$$H-3 \frac{f_3^{15}}{f_1^5 f_2^{10}}\equiv 0 \pmod{5},$$
or equivalently,
\begin{align*}
\left(H-3 \frac{f_3^{15}}{f_1^5 f_2^{10}}\right)\frac{f_1^5 f_3 f_4^{10} f_6^{10}}{f_2^4 f_{12}^6}\equiv 0 \pmod{5},
\end{align*}
since $\frac{f_1^5 f_3 f_4^{10} f_6^{10}}{f_2^4 f_{12}^6}$ is invertible in the ring $\mathbb{Z}/5\mathbb{Z}[[q]]$. According to Lemma \ref{le:AW}, it becomes
$$\frac{15 p^2 (1-p) (1+p)^5 (2+p)^2 (2+5p +12 p^2+ 5 p^3+ 2 p^4) k^8}{32 q^2 (1+2p)}\equiv 0 \pmod{5}.$$
Lemma \ref{le:H} follows obviously.
\end{proof}

We know from Lemma \ref{le:H} that
$$\sum_{n=0}^\infty d(9n+8)q^n\equiv3 f_2 \frac{f_{15}^3}{f_5 f_{10}^2} \pmod{5}.$$
Since $f_2=(q^2;q^2)_\infty$ contains no terms of the form $q^{5n+1}$ and $q^{5n+3}$, we have
$$d(9(5n+1)+8)=d(45n+17)\equiv 0 \pmod{5},$$
and
$$d(9(5n+3)+8)=d(45n+35)\equiv 0 \pmod{5},$$
which leads to Theorem \ref{th:d-mod5}.
\end{proof}

\begin{corollary}\label{co:d45n}
For $n\geq0$, we have
\begin{equation}
d(45n+t)\equiv 0 \pmod{15},
\end{equation}
where $t=17$ and $35$.
\end{corollary}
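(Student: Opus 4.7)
The plan is to reduce the corollary to combining Theorem \ref{th:d-mod5} (which already delivers divisibility by $5$) with a matching divisibility by $3$, and to extract the latter for free from a generating-function identity that already appears in the proof of Theorem \ref{th:d-mod5}.

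Specifically, equation \eqref{eq:d3n+2-gf} in the proof of Theorem \ref{th:d-mod5} reads
\begin{equation*}
\sum_{n=0}^\infty d(3n+2)q^n = \frac{3f_3^3 f_6^3}{f_1 f_2^4},
\end{equation*}
whose right-hand side has an explicit factor of $3$. Since $f_3^3 f_6^3/(f_1 f_2^4)$ has integer coefficients, this immediately yields the congruence
\begin{equation*}
d(3n+2) \equiv 0 \pmod{3} \quad \text{for all } n \geq 0,
\end{equation*}
which is the $d$-analogue of the remark following Corollary \ref{co:c45n} for the function $c$.

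Next, I would observe that $17 = 3\cdot 5 + 2$ and $35 = 3\cdot 11 + 2$, so for every $n \geq 0$ both arguments $45n+17 = 3(15n+5)+2$ and $45n+35 = 3(15n+11)+2$ are of the form $3m+2$. Applying the displayed congruence above gives
\begin{equation*}
d(45n+17) \equiv 0 \pmod{3}, \qquad d(45n+35) \equiv 0 \pmod{3}
\end{equation*}
for all $n \geq 0$.

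Combining these with the congruences $d(45n+17) \equiv 0 \pmod{5}$ and $d(45n+35) \equiv 0 \pmod{5}$ furnished by Theorem \ref{th:d-mod5}, and noting that $\gcd(3,5)=1$, we conclude by the Chinese remainder theorem that $d(45n+t) \equiv 0 \pmod{15}$ for $t=17,35$, which is the claim. There is essentially no obstacle here beyond spotting that the factor of $3$ in \eqref{eq:d3n+2-gf} supplies the mod $3$ information automatically; the whole corollary is a short deduction once both congruences are paired up, entirely parallel to how Corollary \ref{co:c45n} was derived from Theorem \ref{th:c45n}.
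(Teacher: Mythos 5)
Your proposal is correct and follows essentially the same route as the paper: the authors likewise obtain $d(3n+2)\equiv 0\pmod 3$ directly from the explicit factor of $3$ in \eqref{eq:d3n+2-gf} (noting it also appears as Theorem 4.2 of Choi--Kim), observe that $45n+17$ and $45n+35$ are both $\equiv 2\pmod 3$, and combine this with Theorem \ref{th:d-mod5} to conclude modulo $15$. No gaps.
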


\begin{proof}
Again, we know from \cite[Theorem 4.2]{Choi-Kim-2012} that
$$d(3n+2)\equiv 0 \pmod{3}.$$
It indeed follows directly from \eqref{eq:d3n+2-gf}. We therefore prove Corollary \ref{co:d45n} by Theorem \ref{th:d-mod5}.
\end{proof}

\section{Final remarks}

We end this paper by raising the following congruences.

\begin{question}
We have
\begin{align}
c(45n+21)&\equiv0\pmod5,\\
c(63n+t)&\equiv0\pmod7,
\end{align}
where $t=30$, $48$ and $57$.
\end{question}

\begin{question}
We have
\begin{align}
d(45n+41)&\equiv0\pmod5,\\
d(63n+t)&\equiv0\pmod7,
\end{align}
where $t=32$, $50$ and $59$.
\end{question}

All these congruences have been verified by the authors using an algorithm due to Radu and Sellers \cite{RS2011}. However, since the modular form proofs are very routine and tedious, we here want to ask if there exist elementary proofs of these congruences.

%\subsection*{Acknowledgements}

\bibliographystyle{amsplain}

\end{document}